\documentclass[a4paper,11pt]{amsart}
\usepackage[english]{babel}
\usepackage{amssymb}
\usepackage{amscd}
\usepackage{mathtools}
\usepackage[paper=a4paper,left=3cm,right=3cm,top=2cm,bottom=2.0cm]{geometry}
\usepackage{enumerate}
\usepackage{hyperref}
\usepackage{tikz}
\usepackage{tikz-cd}

\newtheorem{thm}{Theorem}[section]
\newtheorem*{thm-non}{Theorem}

\newtheorem{lem}[thm]{Lemma}
\newtheorem{prop}[thm]{Proposition}
\newtheorem{cor}[thm]{Corollary}
\theoremstyle{definition}
\newtheorem{defi}[thm]{Definition}
\newtheorem{rem}[thm]{Remark}

\DeclareMathOperator\Br{Br}
\DeclareMathOperator\OO{\mathcal{O}}
\DeclareMathOperator\End{End}
\DeclareMathOperator\Hom{Hom}
\DeclareMathOperator\Ext{Ext}
\DeclareMathOperator\Coh{Coh}
\DeclareMathOperator\id{id}
\DeclareMathOperator\M{M}
\DeclareMathOperator\Hilb{Hilb}
\DeclareMathOperator\Fix{Fix}

\DeclareMathOperator\Ima{Im}

\DeclareMathOperator\NS{NS}
\DeclareMathOperator\Pic{Pic}

\newcommand\Mv{\M_{\oX,\oh}(v)}
\newcommand\tE{\Theta(\overline{E})}
\newcommand\tEa{\Theta(\overline{E_1})}
\newcommand\tEb{\Theta(\overline{E_2})}
\newcommand\oX{\overline{X}}
\newcommand\oD{\overline{D}}
\newcommand\op{\overline{p}}
\newcommand\oq{\overline{q}}
\newcommand\oh{\overline{h}}
\newcommand\oW{\overline{W}}
\newcommand\oE{\overline{E}}
\newcommand\oY{\overline{Y}}
\newcommand\oEa{\overline{E_1}}
\newcommand\oEb{\overline{E_2}}

\DeclareMathOperator\mA{\mathcal{A}}
\DeclareMathOperator\omA{\overline{\mathcal{A}}}

\begin{document}

\title[Enriques surfaces with trivial Brauer map]{Enriques surfaces with trivial Brauer map and involutions on hyperk\"ahler manifolds}

\author{Fabian Reede}
\address{Institut f\"ur Algebraische Geometrie, Leibniz Universit\"at Hannover, Welfengarten 1, 30167 Hannover, Germany}
\email{reede@math.uni-hannover.de}


\begin{abstract}
Let $X$ be an Enriques surface. Using Beauville's result about the triviality of the Brauer map of $X$, we define a new involution on the category of coherent sheaves on the canonically covering K3 surface $\overline{X}$. We relate the fixed locus of this involution to certain Picard schemes of the noncommutative pair $(X,\mA)$, where $\mA$ is an Azumaya algebra on $X$ defined by the nontrivial element in the Brauer group of $X$.
\end{abstract}

\maketitle

\section*{Introduction}
Let $X$ be an Enriques surface. The universal cover $\oX$ of $X$ is known to be a K3 surface. The covering $q: \oX \rightarrow X$ is an \'{e}tale double cover with covering involution $\iota$. 

The universal cover induces a map between Brauer groups, the so-called Brauer map of $X$: $q^{*}: \Br(X) \rightarrow \Br(\oX)$. Since $\Br(X)\cong \mathbb{Z}/2\mathbb{Z}$ it is a natural question to determine whether the Brauer map is trivial. Beauville answers this question completely in \cite{beau}: the Brauer map of an Enriques surface $X$ is trivial, if and only if $\oX$ admits a line bundle $L=\OO_{\oX}(\ell)$ which is anti-invariant with respect to $\iota$, that is $\iota^{*}L=L^{-1}$, and such that $\ell^2 \equiv 2 \pmod{4}$.

The nontrivial element in $\Br(X)$ can be represented by an Azumaya algebra $\mA$ of rank four on $X$, a quaternion algebra. The triviality of the Brauer map implies that the pullback $\omA$ to $\oX$ is a trivial Azumaya algebra of the form $\mathcal{E}nd_{\oX}(F)$. In the first section we give an explicit description of such a locally free sheaf $F$ of rank two. Then the functor 
\begin{equation*}
	\Theta: \Coh_l(\oX,\omA) \rightarrow \Coh(\oX),\,\,\, G \mapsto F^{*}\otimes_{\omA} G
\end{equation*}
is a Morita equivalence. Here $\Coh_l(\oX,\omA)$ is the the category of coherent sheaves on $\oX$ which are also left $\omA$-modules and $F^{*}$ is seen as a right $\omA$-module.

Using Beauville's result we define and study the following involution:
\begin{equation*}
	\sigma: \Coh(\oX)\rightarrow \Coh(\oX),\,\,\,G \mapsto \sigma(G):=\iota^{*}G\otimes L.
\end{equation*}
One observation is that we have the following relation: $\Theta\circ\iota^{*}=\sigma\circ\Theta$. This shows that if a coherent left $\mA$-module $G$ is fixed by $\iota^{*}$ then $\Theta(G)$ is fixed by $\sigma$. 

The main result in the second section states that a torsion free sheaf $G$ of rank two on $\oX$, which is fixed by $\sigma$, is slope semistable with respect to a polarization of the from $\oh$, where $h$ is a polarization on $X$. By standard results about polarizations and walls, we find that such sheaves are in fact stable for certain choices of Mukai vectors $v$. We study their moduli spaces $\Mv$ and show that $\sigma$ restricts to an anti-symplectic involution of $\Mv$ and thus gives rise to a Lagrangian subscheme $L$ given by $\Fix(\sigma)$.

In the third section we study moduli spaces $\mathrm{M}_{\mA/X}(v_{\mA})$ that classify coherent torsion free sheaves on $X$ that are also left $\mA$-modules, such that they are generically of rank one over the division ring $\mA_{\eta}$. These spaces where constructed by Hoffmann and Stuhler in \cite{hoff}. We prove that such an $\mA$-module $E$ defines a smooth point if $\Theta(\oE)$ is slope stable on $\oX$. We show that in theses cases $\mathrm{M}_{\mA/X}(v_{\mA})$ is an \'{e}tale double cover of $\Fix(\sigma)$ and that the locus of locally projective $\mA$-modules is dense in $\mathrm{M}_{\mA/X}(v_{\mA})$.

In the last section we consider the case that $\mathrm{M}_{\mA/X}(v_{\mA})$ is singular. We give an explicit description of the structure of $\Theta(\oE)$ if $E$ defines a singular point. We end by showing that $\mathrm{M}_{\mA/X}(v_{\mA})$ is generically smooth by adapting a result of Kim in \cite{kim} to this situation.

In this article we consider Enriques surfaces over the complex numbers $\mathbb{C}$ with trivial Brauer map such that $\rho(\oX)=11$. This is the first case where a trivial Brauer map is possible.

\section{Enriques surfaces with trivial Brauer map}\label{trivbr}
Let $X$ be an Enriques surface, that is $\mathrm{H}^1(X,\OO_X)=0$ and $\omega_X\neq \OO_X$ is 2-torsion.
We have the canonical \'{e}tale double cover $q:\oX \rightarrow X$ induced by $\omega_X$. It is well known that $\oX$ is a K3 surface. Denote the covering involution by $\iota: \oX \rightarrow \oX$. 

It is also well known that 
\begin{equation*}
\Br(X)\cong \mathbb{Z}/2\mathbb{Z}=\left\langle \alpha \right\rangle\,\,\text{and}\,\, \Br(\oX)\cong\Hom(\mathrm{T}_{\oX},\mathbb{Q}/\mathbb{Z})
\end{equation*}
where $\mathrm{T}_{\oX}$ is the transcendental lattice of $\oX$, see \cite[Corollary 5.7.1]{dolg} and \cite[Section 2]{beau}

The canonical cover induces a map on Brauer groups, the so called Brauer map:
\begin{equation*}
q^{*}:  \Br(X) \rightarrow \Br(\oX).
\end{equation*}
In \cite[Proposition 3.4, Corollary 5.7]{beau} Beauville gives an explicit description of the element $q^{*}\alpha$ as well as the following equivalence for the triviality of the Brauer map:

\begin{thm}\label{beau1}
Let $X$ be an Enriques surface. The Brauer map $q^{*}: \Br(X)\rightarrow \Br(\oX)$ is trivial if and only if there is $L=\OO_{\oX}(\ell)\in \Pic(\oX)$ with $\iota^{*}L=L^{-1}$ and $\ell^2 \equiv 2 \pmod{4}$.
\end{thm}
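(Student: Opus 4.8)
The plan is to identify the kernel of $q^{*}$ directly, using that $\Br(X)\cong\mathbb{Z}/2\mathbb{Z}$ is generated by $\alpha$, so that triviality of the Brauer map is equivalent to $\alpha\in\ker(q^{*})$. Since $q\colon\oX\to X$ is a free quotient by $G=\mathbb{Z}/2\mathbb{Z}=\langle\iota\rangle$, I would run the Hochschild--Serre spectral sequence
\[
E_2^{p,r}=\mathrm{H}^p\!\left(G,\mathrm{H}^r(\oX,\OO_{\oX}^{*})\right)\ \Longrightarrow\ \mathrm{H}^{p+r}(X,\OO_X^{*}),
\]
whose edge map in total degree two is exactly $q^{*}\colon\Br(X)\to\Br(\oX)$. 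From the exponential sequence together with $\mathrm{H}^2(X,\OO_X)=0$ and $\mathrm{H}^3(X,\mathbb{Z})\cong\mathbb{Z}/2\mathbb{Z}$ one gets $\mathrm{H}^2(X,\OO_X^{*})\cong\mathbb{Z}/2\mathbb{Z}$, so the whole abutment in degree two has order two and the spectral sequence must distribute this single factor among its graded pieces.

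Next I would pin down these graded pieces. Here $\mathrm{H}^0(\oX,\OO_{\oX}^{*})=\mathbb{C}^{*}$ carries the trivial $G$-action, so $E_2^{2,0}=\mathrm{H}^2(G,\mathbb{C}^{*})=0$ (the group $\mathbb{C}^{*}$ is divisible), while $E_2^{3,0}=\mathrm{H}^3(G,\mathbb{C}^{*})\cong\mathbb{Z}/2\mathbb{Z}$. Consequently the filtration piece detecting $\ker(q^{*})$ is $E_\infty^{1,1}$, a subquotient of
\[
E_2^{1,1}=\mathrm{H}^1(G,\Pic(\oX))=\ker(1+\iota^{*})/\Ima(1-\iota^{*}),
\]
the Tate cohomology of $G$ acting on the lattice $\Pic(\oX)=\NS(\oX)$. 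The numerator $\ker(1+\iota^{*})$ is precisely the group of anti-invariant classes $\{\ell\in\NS(\oX):\iota^{*}\OO_{\oX}(\ell)=\OO_{\oX}(\ell)^{-1}\}$ appearing in the statement; it is nonzero exactly when $\NS(\oX)$ meets the anti-invariant part of $\mathrm{H}^2(\oX,\mathbb{Z})$, which forces $\rho(\oX)\ge 11$ and explains why $\rho(\oX)=11$ is the first possible case. In the relevant rank-one situation $\ker(1+\iota^{*})=\langle\ell_0\rangle$ one finds $\mathrm{H}^1(G,\Pic(\oX))\cong\mathbb{Z}/2\mathbb{Z}$, and the order-two abutment forces either $E_\infty^{1,1}=\mathbb{Z}/2\mathbb{Z}$ (then $\ker(q^{*})=\Br(X)$ and the Brauer map is trivial) or $E_\infty^{1,1}=0$ (then $q^{*}$ is injective).

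Everything therefore reduces to a single bit: the value on the generator of the only surviving differential
\[
d_2^{1,1}\colon \mathrm{H}^1(G,\Pic(\oX))\longrightarrow E_2^{3,0}\cong\mathbb{Z}/2\mathbb{Z}.
\]
The Brauer map is trivial if and only if $d_2^{1,1}$ vanishes, i.e.\ if and only if the anti-invariant generator survives to $E_\infty$. The hard part is to evaluate this differential geometrically: I expect it to be governed by the self-intersection $\ell_0^{2}$ modulo $4$, through the quadratic refinement attached to lifting the anti-invariance datum $\iota^{*}\OO_{\oX}(\ell_0)\cong\OO_{\oX}(\ell_0)^{-1}$ to a genuine $G$-linearisation. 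Matching the resulting parity to the stated residue — that vanishing occurs exactly for $\ell^{2}\equiv 2\pmod 4$ rather than $\ell^{2}\equiv 0\pmod 4$ — is the delicate point, and it includes a correction coming from the canonical class $\omega_X$ (equivalently, from the nontrivial group $\mathrm{H}^1(G,\mathbb{C}^{*})\cong\mathbb{Z}/2\mathbb{Z}$, which measures $\omega_X$ on $X$). This single computation is the heart of Beauville's Proposition~3.4.

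As a cross-check I would also run the computation on the transcendental side, where $\Br(\oX)\cong\Hom(\mathrm{T}_{\oX},\mathbb{Q}/\mathbb{Z})$: there $q^{*}\alpha$ is represented by an explicit homomorphism $\mathrm{T}_{\oX}\to\tfrac12\mathbb{Z}/\mathbb{Z}$, and its vanishing translates into the existence of a half-integral anti-invariant class $\tfrac12\ell$ pairing integrally with $\mathrm{T}_{\oX}$, which is again the condition $\ell^{2}\equiv 2\pmod 4$. In either approach the genuine obstacle is the same: controlling the anti-invariant sublattice of $\NS(\oX)$ and the discriminant forms of $\mathrm{H}^2(\oX,\mathbb{Z})^{\pm}$ under the Enriques involution, and evaluating the relevant coboundary precisely enough to land on the residue $2$ and not $0$. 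Once that single parity is established, both implications of the theorem follow formally from the spectral sequence.
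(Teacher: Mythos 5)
The first thing to note is that the paper does not prove this statement at all: it is imported verbatim from Beauville (\cite[Proposition 3.4, Corollary 5.7]{beau}), so there is no in-paper argument to compare against. Judged on its own terms, your proposal sets up a correct and reasonable framework --- the Hochschild--Serre spectral sequence for the free $\mathbb{Z}/2$-quotient, the vanishing of $E_2^{2,0}=\mathrm{H}^2(G,\mathbb{C}^{*})$ by divisibility, and the resulting identification of $\ker(q^{*})$ with $E_\infty^{1,1}$, a subquotient of $\mathrm{H}^1(G,\Pic(\oX))$ --- and this is genuinely the circle of ideas behind Beauville's proof (compare Remark \ref{lincoh}, which quotes exactly the statement that $L$ defines a nonzero class in $\mathrm{H}^1(G,\Pic(\oX))$). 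But the proposal stops precisely where the theorem begins: you never evaluate the differential $d_2^{1,1}$, you only ``expect'' it to be governed by $\ell^2 \bmod 4$ and explicitly flag the identification of the correct residue as ``the delicate point'' and ``the heart of Beauville's Proposition 3.4''. That evaluation is not a finishing touch; it is the entire content of the equivalence. What you have actually shown is that triviality of $q^{*}$ is equivalent to the survival of some nonzero class of $\mathrm{H}^1(G,\Pic(\oX))$ in $E_\infty^{1,1}$, which is a reformulation of the problem, not a proof of the stated criterion. Neither implication of the ``if and only if'' is established.

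Two further points would need repair even at the level of the skeleton. First, you silently reduce to the case where the anti-invariant part of $\NS(\oX)$ has rank one ($\ker(1+\iota^{*})=\langle\ell_0\rangle$); the theorem is stated for an arbitrary Enriques surface, where this sublattice can have any rank up to $10$, and the criterion is an existence statement ranging over all anti-invariant $\ell$. Second, for the condition $\ell^2\equiv 2\pmod 4$ to be detectable on $\mathrm{H}^1(G,\Pic(\oX))=\ker(1+\iota^{*})/\Ima(1-\iota^{*})$ one must check that $\ell\mapsto\tfrac{1}{2}\ell^2\bmod 2$ is well defined modulo $\Ima(1-\iota^{*})$ (it is, but this uses that $m\cdot\iota^{*}m$ is even, which itself requires an argument via the Enriques lattice), and one must then relate this \emph{quadratic} function to the \emph{linear} differential $d_2^{1,1}$ --- or else, as Beauville actually does, compute $q^{*}\alpha$ explicitly as an element of $\Hom(\mathrm{T}_{\oX},\mathbb{Q}/\mathbb{Z})$ and analyse its vanishing lattice-theoretically. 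As it stands, the argument is a sound reduction followed by an unproved claim.
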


The lattice $q^{*}\NS(X)$ is a primitive rank 10 sublattice in $\NS(\oX)$, that is we must have $\rho(\oX)\geq 10$. This sublattice is in fact the invariant part of the action of the induced involution $\iota^{*}$ on $\NS(\oX)$. 

More exactly (see e.g. \cite[Theorem 5.1]{hori}): there is an involution $\tau$ on the K3-lattice $\Lambda_{\mathrm{K3}}$ decomposing the lattice as $\Lambda_{\mathrm{K3}}=\Lambda^{+}\oplus\Lambda^{-}$ according to the eigenspaces of $\tau$. Now it is possible to choose a marking $\varphi: \mathrm{H}^2(\oX,\mathbb{Z}) \xrightarrow{\,\,\cong\,\,}\Lambda_{\mathrm{K3}}$ such that $\tau\circ \varphi = \varphi\circ \iota^{*}$. Then by \cite[Proposition 2.3]{namika} one has 
\begin{equation*}
\Lambda^{+}\cap \NS(\oX)=q^{*}\NS(X).
\end{equation*}

If $X$ is a very general Enriques surface then \cite[Proposition 5.6]{namika} gives the equality
\begin{equation*}
\NS(\oX)=q^{*}\NS(X)\cong\NS(X)(2)\,\,\,\text{resp.}\,\,\,\NS(\oX)\cap \Lambda^{-}=0,
\end{equation*}
i.e. there are no $\iota^{*}$-anti-invariant line bundles. Hence in these cases the Brauer map is non-trivial. So the first interesting case happens possibly for Enriques surfaces with $\rho(\oX)=11$.

In \cite{ohashi} Ohashi classified all K3 surfaces with $\rho=11$ allowing for a fixed point free involution, that is K3 surfaces that cover an Enriques surface. And indeed by \cite[Proposition 3.5]{ohashi} there are K3 surfaces with Enriques quotient $q:\oX \rightarrow X$ satisfying
\begin{equation*}
\NS(\oX)=q^{*}\NS(X) \oplus \mathbb{Z}L\,\,\,\text{with}\,\,\,L=\OO_{\oX}(\ell)\,\,\text{such that}\,\, \ell^2=-2N,\,\,\,N\geqslant 2
\end{equation*}
and by the decomposition of the K3-lattice we see
\begin{equation*}
\Lambda^{-}\cap \NS(\oX)=\mathbb{Z}L\,\,\,\text{i.e.}\,\,\,\iota^{*}L=L^{-1}.
\end{equation*}
Thus if we choose an odd $N\geqslant 3$, we see that there are Enriques surfaces $X$ with associated K3 surface satisfying $\rho(\oX)=11$ such that all conditions of Theorem \ref{beau1} are satisfied. We fix such an Enriques surface $X$ in the following.

\begin{defi}
The autoequivalence $\sigma_{(\iota,L)}$ of $\Coh(\oX)$ associated to the pair $(\iota,L)$ is defined to be
\begin{equation*}
\sigma_{(\iota,L)} : \Coh(\oX) \rightarrow \Coh(\oX),\,\,\, G \mapsto \iota^{*}G\otimes L.
\end{equation*}
Since $\iota^{*}$ is an involution and $L$ is $\iota^{*}$-anti-invariant, we see that in fact $\sigma_{(\iota,L)}$ is also an involution. In the following we denote this involution simply by $\sigma$. 
\end{defi}

\begin{rem}\label{lincoh}
The line bundle $L$ defines a non-zero element in the group cohomology $\mathrm{H}^1(G,\Pic(\oX))$ for $G=\left\langle \iota^{*} \right\rangle$. More exactly $L$ is in the kernel of $\id\otimes \iota^{*}$ but not in the image of $\id\otimes \iota^{*}(-)^{-1}$, see \cite[Corollary 4.3]{beau}.
\end{rem}

In \cite[Proposition 3.3]{reede} we proved that the Brauer class $\alpha$ can be represented by a quaternion algebra $\mathcal{A}$ on $X$. Denote by $p: Y \rightarrow X$ the Brauer-Severi variety associated to $\mathcal{A}$. This is a $\mathbb{P}^1$-bundle which is not of the form $\mathbb{P}(E)$ for any locally free $\mathcal{O}_X$-module $E$ of rank 2. Since $q^{*}\alpha=0$ in $\Br(\oX)$ it is known that $\omA=q^{*}\mA\cong \mathcal{E}nd_{\oX}(F)$ for some locally free sheaf $F$ of rank two on $\oX$.

To find a candidate for $F$ we note that in \cite[Lemma 10]{mart} Mart\'{i}nez defines $E:=\mathcal{O}_{\oX} \oplus L$ and shows that $\mathbb{P}(E)\rightarrow \oX$ descends to a $\mathbb{P}^1$-bundle over $X$, which does not come from a locally free sheaf. This $\mathbb{P}^1$-bundle therefore must agree with the Brauer-Severi variety $Y\rightarrow X$ associated to $\mA$ and have Brauer class $\alpha$. 

By \cite[8.4]{quill}, we get the following cartesian diagram
\begin{equation*}
	\begin{tikzcd}
		\mathbb{P}(E) \arrow[r,"\oq"]\arrow[d,swap,"\op"] & Y \arrow[d,"p"]\\
		\oX \arrow[r,"q"] & X
	\end{tikzcd}
\end{equation*}	
together with an isomorphism $\omA:=q^{*}\mathcal{A}\cong \mathcal{E}nd_{\oX}(E)$. 

\begin{rem}
Quillen actually considers the opposite algebra $\mA^{op}$. We can ignore the opposite algebra, as $\mA$ has order two in the Brauer group, that is, there is an isomorphism $\mA\cong \mA^{op}$. In general using the opposite algebra is a \emph{convention}, depending on the question if the Brauer-Severi variety of $\mA$ classifies certain right or left ideals, see \cite[Warning 24]{kol}. 
\end{rem}

To have nicer formulas in the following, we will use $\det(E)=L$ and the isomorphism
\begin{equation*}
E^{*}\cong E\otimes \det(E)^{-1} = E\otimes L^{-1}.
\end{equation*}
Defining $F:=E^{*}$, the isomorphism gives rise to induced isomorphisms
\begin{equation*}
\omA\cong \mathcal{E}nd_{\oX}(E)\cong \mathcal{E}nd_{\oX}(E\otimes L^{-1})\cong \mathcal{E}nd_{\oX}(E^{*})= \mathcal{E}nd_{\oX}(F). 
\end{equation*}

Recall that $F$ is a left $\mathcal{E}nd_{\oX}(F)$-module and $F^{*}$ is a right one. In this situation we have the following form of Morita equivalence between the category of coherent left $\omA$-modules and coherent $\OO_{\oX}$-modules, see \cite[Proposition 8.26]{wed}:
\begin{align*}
\Theta: &\Coh_l(\oX,\omA) \xrightarrow{\sim} \Coh(\oX),\,\,\, H \mapsto F^{*}\otimes_{\omA}H\\
\intertext{with inverse is given by}
\Xi: &\Coh(\oX) \xrightarrow{\sim} \Coh_l(\oX,\omA),\,\,\,E \mapsto F\otimes E.
\end{align*}

The next lemma studies the relation between $\Theta$ and the involutions $\iota^{*}$ and $\sigma$.
\begin{lem}\label{moritaiota}
For $G\in\Coh_l(\oX,\omA)$ there is an isomorphism 
\begin{equation*}
\Theta(\iota^{*}G) \cong \sigma(\Theta(G)).
\end{equation*}
\end{lem}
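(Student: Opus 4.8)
The plan is to reduce the statement to three ingredients: the geometric input $\iota^{*}L=L^{-1}$, the fact that the automorphism $\iota$ lets $\iota^{*}$ commute with the balanced tensor product defining $\Theta$, and one module-level identity encoding the twist by $L$. First I would record the geometric consequences. Since $E=\OO_{\oX}\oplus L$ and $\iota^{*}L=L^{-1}$, there is an isomorphism $\iota^{*}E\cong \OO_{\oX}\oplus L^{-1}\cong E\otimes L^{-1}\cong E^{*}=F$, and dually $\iota^{*}F\cong E=F^{*}$. I also use that $\omA=q^{*}\mA$ together with $q\circ\iota=q$ gives a canonical algebra isomorphism $\iota^{*}\omA=(q\circ\iota)^{*}\mA=q^{*}\mA=\omA$, so that $\iota^{*}$ sends left (resp. right) $\omA$-modules to left (resp. right) $\omA$-modules. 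Under this identification the sheaf isomorphisms above are equivariant: applying $\iota^{*}$ to the tautological $\mathcal{E}nd_{\oX}(E)$-module $E$ yields $\iota^{*}E$ as a module over $\mathcal{E}nd_{\oX}(\iota^{*}E)$, which matches the algebra identification.

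Second, I would pull the involution through the Morita tensor product. Because $\iota$ is an automorphism of $\oX$, the functor $\iota^{*}$ is exact and commutes with $\otimes_{\omA}$ as a statement about $(\omA,\omA)$-bimodules, so for $G\in\Coh_l(\oX,\omA)$ one obtains
\begin{equation*}
\iota^{*}\bigl(\Theta(G)\bigr)=\iota^{*}\bigl(F^{*}\otimes_{\omA}G\bigr)\cong \iota^{*}F^{*}\otimes_{\omA}\iota^{*}G,
\end{equation*}
where $\iota^{*}F^{*}$ carries its right $\omA$-structure and $\iota^{*}G$ its left one. Tensoring with $L$ and sliding the line bundle onto the right-module factor then gives
\begin{equation*}
\sigma\bigl(\Theta(G)\bigr)=\iota^{*}\bigl(\Theta(G)\bigr)\otimes L\cong\bigl(\iota^{*}F^{*}\otimes L\bigr)\otimes_{\omA}\iota^{*}G.
\end{equation*}

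The decisive point is the single identity $\iota^{*}F^{*}\otimes L\cong F^{*}$ as \emph{right} $\omA$-modules. At the level of $\OO_{\oX}$-modules this is immediate, since $F^{*}=E$ and $\iota^{*}E\cong E\otimes L^{-1}$, whence $\iota^{*}F^{*}\otimes L\cong E\otimes L^{-1}\otimes L=E=F^{*}$. Granting the compatibility with the $\omA$-action, the last display becomes $F^{*}\otimes_{\omA}\iota^{*}G=\Theta(\iota^{*}G)$, which is exactly the asserted isomorphism $\Theta(\iota^{*}G)\cong\sigma(\Theta(G))$.

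I expect the main obstacle to be precisely this last compatibility: verifying that the evident $\OO_{\oX}$-linear isomorphism $\iota^{*}F^{*}\otimes L\cong F^{*}$ respects the right $\omA$-actions once the identification $\iota^{*}\omA\cong\omA$ is fixed. Since all the data descend from $X$ via $q$, the cleanest route is to argue Zariski-locally, where $\omA$ trivializes as $\mathcal{E}nd_{\oX}(E)$: one tracks how the precomposition action on $F^{*}$ transforms under $\iota^{*}$ and under the twist by $L$, and confirms it agrees with the canonical action transported along $\iota^{*}\omA\cong\omA$. The commutation of $\iota^{*}$ with $\otimes_{\omA}$ used in the second step, although standard for an isomorphism of schemes, should likewise be stated carefully at the level of bimodules rather than merely underlying sheaves.
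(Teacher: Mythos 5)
Your proposal is correct and rests on the same geometric input as the paper, namely that $\iota^{*}L=L^{-1}$ forces $\iota^{*}F\cong F\otimes L$ (equivalently $\iota^{*}F^{*}\otimes L\cong F^{*}$); but you run the computation on the $\Theta$ side, writing $\sigma(\Theta(G))\cong(\iota^{*}F^{*}\otimes L)\otimes_{\omA}\iota^{*}G$, whereas the paper runs it on the $\Xi$ side: it writes $G\cong\Xi(\Theta(G))=F\otimes\Theta(G)$, pulls back by $\iota^{*}$, regroups to get $\iota^{*}G\cong\Xi(\sigma(\Theta(G)))$, and then applies $\Theta$ once more. The difference is more than cosmetic in one respect: because $\Xi$ is an ordinary tensor product $F\otimes_{\OO_{\oX}}(-)$ rather than a balanced product over $\omA$, the paper never has to check that $\iota^{*}$ commutes with $\otimes_{\omA}$ at the bimodule level, nor that the isomorphism $\iota^{*}F^{*}\otimes L\cong F^{*}$ is right $\omA$-linear for the fixed identification $\iota^{*}\omA\cong\omA$ --- exactly the two compatibilities you single out as the main remaining work. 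So your route is viable but leaves those verifications open, while the paper's detour through the inverse equivalence absorbs them into the single statement that $\Xi$ and $\Theta$ are mutually inverse; if you want to keep your direct argument, you should either carry out the local check you sketch or restructure as the paper does.
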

\begin{proof}
We first note that indeed $\iota^{*}G\in \Coh(\oX,\omA)$ as $\iota^{*}\omA\cong \omA$, that is Morita equivalence for $\iota^{*}G$ is well defined. Further we have an isomorphism
\begin{equation*}\label{F-iso}
\iota^{*}F=\iota^{*}\left( \mathcal{O}_{\oX}\oplus L^{-1}\right)\cong \mathcal{O}_{\oX}\oplus L\cong (\OO_X \oplus L^{-1})\otimes L\cong F \otimes L. 
\end{equation*}
Using this isomorphism as well as $G\cong \Xi(\Theta(G)) \cong F\otimes\Theta(G)$ we find
\begin{align*}
\iota^{*}G &\cong \iota^{*}(F\otimes \Theta(G)) \cong \iota^{*}F\otimes \iota^{*}\Theta(G)\cong F\otimes L\otimes\iota^{*}\Theta(G) \\
&\cong F\otimes (\iota^{*}\Theta(G)\otimes L)\cong F\otimes \sigma(\Theta(G))\cong \Xi(\sigma(\Theta(G))).
\end{align*}
Applying $\Theta(-)$ once more gives the desired isomorphism.
\end{proof}

The following corollary contains an easy but crucial observation:

\begin{cor}\label{fix}
Assume $G\in\Coh_l(\oX,\omA)$ is fixed by $\iota^{*}$, then $\Theta(G)\in \Fix(\sigma)$.
\end{cor}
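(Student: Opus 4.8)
The plan is to obtain this as an immediate consequence of Lemma \ref{moritaiota}. That lemma supplies, for every object $G \in \Coh_l(\oX,\omA)$, a functorial isomorphism $\Theta(\iota^{*}G) \cong \sigma(\Theta(G))$. The only additional ingredient I need is the trivial categorical fact that any functor carries isomorphic objects to isomorphic objects.

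The argument then runs as follows. By hypothesis $G$ is fixed by $\iota^{*}$, meaning there is an isomorphism $\iota^{*}G \cong G$ in $\Coh_l(\oX,\omA)$; the target category is legitimate here because $\iota^{*}\omA \cong \omA$, so $\iota^{*}$ preserves the property of being a left $\omA$-module, exactly as recorded at the start of the proof of Lemma \ref{moritaiota}. Applying the functor $\Theta$ to this isomorphism gives $\Theta(\iota^{*}G) \cong \Theta(G)$. Rewriting the left-hand side by Lemma \ref{moritaiota} produces $\sigma(\Theta(G)) \cong \Theta(G)$, which is precisely the statement that $\Theta(G) \in \Fix(\sigma)$.

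There is no real obstacle to overcome at this stage: all of the genuine work is already contained in Lemma \ref{moritaiota}, whose proof encodes the interaction of the Morita functor $\Theta$ with $\iota^{*}$ through the twist $\iota^{*}F \cong F \otimes L$. The one subtlety worth flagging is that ``fixed by $\iota^{*}$'' should be read as an isomorphism of $\omA$-modules rather than merely of underlying coherent sheaves, since it is to this isomorphism in $\Coh_l(\oX,\omA)$ that the functor $\Theta$ is applied. With the lemma in hand, the corollary reduces to a single-line diagram chase.
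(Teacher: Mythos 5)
Your proposal is correct and is exactly the argument the paper intends: the corollary is stated without proof as an immediate consequence of Lemma \ref{moritaiota}, obtained by applying $\Theta$ to the isomorphism $\iota^{*}G\cong G$ and rewriting $\Theta(\iota^{*}G)$ as $\sigma(\Theta(G))$. Your remark that the fixing isomorphism should be taken in $\Coh_l(\oX,\omA)$ is a sensible clarification but introduces nothing beyond what the paper already assumes.
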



\begin{rem}
The corollary applies especially to those $G\in \Coh_l(\oX,\omA)$ which are in the image of $q^{*}:\Coh_l(X,\mA)\rightarrow \Coh_l(\oX,\omA)$.
\end{rem}

\section{Stable sheaves and involutions on hyperk\"ahler manifolds}
The last section suggests to study sheaves on $\oX$ which are fixed under the involution $\sigma$. We first start with their numerical data:
\begin{lem}
Let $G$ be a coherent torsion free $\OO_{\oX}$-module with rank $r$. If $G\in \Fix(\sigma)$ then $r=2a$ for some $a\in \mathbb{N}$ and $\operatorname{c}_1(G)=\oD+a\ell$ for some $D\in \NS(X)$. 
\end{lem}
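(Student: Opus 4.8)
The plan is to derive both conclusions purely from the effect of $\sigma$ on first Chern classes, exploiting the eigenspace decomposition of $\NS(\oX)$ recalled above. Being fixed by $\sigma$ means there is an isomorphism $G\cong\sigma(G)=\iota^{*}G\otimes L$. Since $\iota$ is an automorphism and $L$ is a line bundle, comparing ranks yields only the tautology $r=r$, so the parity statement cannot be seen at the level of rank alone; instead I would pass to $\operatorname{c}_1$ in $\NS(\oX)$, where the sign of the $\iota^{*}$-action on $\ell$ will produce the constraint.

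First I would compute the class of $\sigma(G)$. Naturality of $\operatorname{c}_1$ under the automorphism $\iota$, together with the standard formula for tensoring by a line bundle, gives
\begin{equation*}
\operatorname{c}_1(\sigma(G))=\operatorname{c}_1(\iota^{*}G\otimes L)=\iota^{*}\operatorname{c}_1(G)+r\,\ell.
\end{equation*}
The isomorphism $G\cong\sigma(G)$ then forces the identity $\iota^{*}\operatorname{c}_1(G)+r\ell=\operatorname{c}_1(G)$ in $\NS(\oX)$.

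Next I would feed in the decomposition $\NS(\oX)=q^{*}\NS(X)\oplus\mathbb{Z}L$ from Ohashi's result, writing $\operatorname{c}_1(G)=\oD+b\ell$ with $\oD\in q^{*}\NS(X)$ and $b\in\mathbb{Z}$. Since $q^{*}\NS(X)=\Lambda^{+}\cap\NS(\oX)$ is exactly the $\iota^{*}$-invariant part while $\iota^{*}\ell=-\ell$ (as $\iota^{*}L=L^{-1}$), applying $\iota^{*}$ gives $\iota^{*}\operatorname{c}_1(G)=\oD-b\ell$. Substituting into the identity above and cancelling $\oD$ leaves $(r-2b)\ell=0$; as $\ell^{2}=-2N\neq0$ the class $\ell$ is nontorsion in $\NS(\oX)$, so $r=2b$. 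Setting $a:=b$ yields $r=2a$ and $\operatorname{c}_1(G)=\oD+a\ell$, with $a\in\mathbb{N}$ because $G$ is a nonzero torsion-free sheaf of positive rank.

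The argument reduces to a short linear-algebra computation once the setup is in place, so I do not expect a serious analytic or geometric obstacle. The only point requiring genuine care — and the real input — is the rank-two splitting of $\NS(\oX)$ with $\ell$ spanning the anti-invariant summand: it is precisely this splitting that pins the coefficient of $\ell$ to equal $r/2$, rather than merely forcing $r$ to be even. I would therefore make sure to invoke $\iota^{*}L=L^{-1}$ and the identification $q^{*}\NS(X)=\Lambda^{+}\cap\NS(\oX)$ explicitly at that step.
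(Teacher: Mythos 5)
Your proof is correct and follows essentially the same route as the paper: write $\operatorname{c}_1(G)=\oD+a\ell$ using the decomposition $\NS(\oX)=q^{*}\NS(X)\oplus\mathbb{Z}L$, apply $\operatorname{c}_1$ to the isomorphism $G\cong\iota^{*}G\otimes L$ using $\iota^{*}\oD=\oD$ and $\iota^{*}\ell=-\ell$, and compare coefficients of $\ell$ to get $r=2a$. Your version merely makes explicit the invariant/anti-invariant splitting that the paper's one-line computation uses implicitly.
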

\begin{proof}
Write $\operatorname{c}_1(G)=\oD+a\ell$ for some $D\in \NS(X)$ and $a\in \mathbb{Z}$. Since $G$ is fixed under $\sigma$ we find
\begin{equation*}
\oD+a\ell=\operatorname{c}_1(G)=\operatorname{c}_1(\sigma(G))=\operatorname{c}_1(\iota^{*}G\otimes L)=\iota^{*}(\oD+a\ell)+r\ell=\oD+(r-a)\ell
\end{equation*}
Since $\NS(\oX)$ is torsion free, this implies $r=2a$. 
\end{proof}

\begin{cor}
Let $G$ be a coherent torsion free $\OO_{\oX}$-module. If $G\in \Fix(\sigma)$ then the Mukai vector has the form
\begin{equation*}
v(G)=(2s,\oD+s\ell,\chi(G)-2s)=v(\sigma(G))
\end{equation*}
for some $D\in \NS(X)$ and some $s\in \mathbb{N}$
\end{cor}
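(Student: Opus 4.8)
The plan is to obtain the Mukai vector by combining the previous lemma with Riemann--Roch on the K3 surface $\oX$; the statement is then essentially a matter of substitution.

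First I would recall the shape of the Mukai vector on a K3 surface. Since $\operatorname{td}(\oX)=(1,0,2)$ one has $\sqrt{\operatorname{td}(\oX)}=(1,0,1)$, so for any coherent sheaf $G$
\begin{equation*}
v(G)=\operatorname{ch}(G)\cdot(1,0,1)=\bigl(\rk(G),\,\operatorname{c}_1(G),\,\operatorname{ch}_2(G)+\rk(G)\bigr).
\end{equation*}
Riemann--Roch gives $\chi(G)=2\rk(G)+\operatorname{ch}_2(G)$, hence $\operatorname{ch}_2(G)+\rk(G)=\chi(G)-\rk(G)$, so the last entry of $v(G)$ is $\chi(G)-\rk(G)$.

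Next I would invoke the preceding lemma: as $G$ is torsion free and lies in $\Fix(\sigma)$, we have $\rk(G)=2s$ for some $s\in\mathbb{N}$ and $\operatorname{c}_1(G)=\oD+s\ell$ for some $D\in\NS(X)$ (I simply rename the integer $a$ of the lemma to $s$). Substituting $\rk(G)=2s$ into the formula above turns the third entry into $\chi(G)-2s$ and yields
\begin{equation*}
v(G)=\bigl(2s,\,\oD+s\ell,\,\chi(G)-2s\bigr).
\end{equation*}

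Finally the equality $v(G)=v(\sigma(G))$ is immediate, since $G\in\Fix(\sigma)$ means $\sigma(G)\cong G$ and the Mukai vector depends only on the class of a sheaf in the Grothendieck group, hence only on its isomorphism class. I do not expect any real obstacle here; the single point that requires care is the convention-dependent bookkeeping of the third component, namely confirming that $\operatorname{ch}_2(G)+\rk(G)$ becomes $\chi(G)-2s$ once $\rk(G)=2s$ is inserted. If desired, one could double-check $v(G)=v(\sigma(G))$ by a direct Chern-class computation using $\iota^{*}\oD=\oD$ and $\iota^{*}\ell=-\ell$ exactly as in the proof of the previous lemma, but this is superfluous once one observes that $G$ is fixed.
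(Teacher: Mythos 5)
Your proof is correct and follows exactly the route the paper intends: the paper leaves this corollary without an explicit proof precisely because it is the substitution of the preceding lemma's conclusions ($\rk(G)=2s$, $\operatorname{c}_1(G)=\oD+s\ell$) into the standard formula $v(G)=(\rk(G),\operatorname{c}_1(G),\chi(G)-\rk(G))$ on a K3 surface, plus the observation that $v$ depends only on the isomorphism class. Nothing is missing.
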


Next we want to study slope-(semi)stability of sheaves which are fixed under the involution $\sigma$. For this we recall that for any polarization $h\in \NS(X)$ we have that $\oh\in \NS(\oX)$ is a polarization on $\oX$, since $q$ is finite. It thus makes sense to study $\mu_{\oh}$-(semi)stability of $G\in \Fix(\sigma)$. We will do this for the first non-trivial case, that is with Mukai vector
\begin{equation*}
v(G)=(2,\oD+\ell,\chi(G)-2).
\end{equation*}

We need the following result, which holds more generally, but this will suffices for us:
\begin{lem}\label{subsheaf}
	Let $E$ be a torsion free sheaf on $\oX$ and assume $F_1$ and $F_2$ are saturated rank one subsheaves of $E$. Then either one has $F_1\cap F_2=0$ or $F_1= F_2$.
\end{lem}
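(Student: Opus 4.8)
The plan is to prove the contrapositive-style dichotomy by assuming $F_1\cap F_2\neq 0$ and deducing $F_1=F_2$, the whole argument being driven by the fact that ``saturated'' means the quotients $E/F_i$ are torsion free. First I would record that $F_1\cap F_2$, being a nonzero subsheaf of the rank one torsion free sheaf $F_1$, is itself of rank one. Considering the sum $F_1+F_2\subseteq E$ and the standard exact sequence
\begin{equation*}
0\to F_1\cap F_2\to F_1\oplus F_2\to F_1+F_2\to 0
\end{equation*}
of subsheaves of $E$, a rank count gives $\rk(F_1+F_2)=1+1-1=1$.

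Next I would look at the quotient $(F_1+F_2)/F_1$. By the second isomorphism theorem this is isomorphic to $F_2/(F_1\cap F_2)$, which has rank $1-1=0$ and is therefore a torsion sheaf. On the other hand $(F_1+F_2)/F_1$ is a subsheaf of $E/F_1$, and $E/F_1$ is torsion free precisely because $F_1$ is saturated in $E$. Hence this torsion subsheaf must vanish, forcing $F_1+F_2=F_1$, i.e. $F_2\subseteq F_1$. Interchanging the roles of $F_1$ and $F_2$ and using that $F_2$ is saturated yields the reverse inclusion $F_1\subseteq F_2$, and combining the two gives $F_1=F_2$.

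The argument is essentially formal once the rank bookkeeping is in place; the only step requiring genuine care is the identification of $(F_1+F_2)/F_1$ as a \emph{torsion} subsheaf of the \emph{torsion free} sheaf $E/F_1$, which is exactly the point at which the saturation hypothesis is used. I do not expect a real obstacle here: the whole proof reduces to the behaviour of ranks under sums and intersections of coherent subsheaves of a torsion free sheaf, together with the definition of saturation, and never uses any special feature of $\oX$ beyond $E$ being torsion free.
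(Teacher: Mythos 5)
Your proof is correct and is essentially the same argument as the paper's: the paper phrases it via the induced maps $F_i\to E/F_j$ (whose kernels are $F_1\cap F_2$ and whose images are your $F_i/(F_1\cap F_2)$), but both arguments come down to the saturation hypothesis making $E/F_j$ torsion free and the fact that a rank-zero subsheaf of a torsion-free sheaf vanishes.
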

\begin{proof}
Let $T_i$ denote the torsion free quotient of $E$ by $F_i$. We have two induced morphisms $\alpha_1: F_1 \rightarrow T_2$ and $\alpha_2: F_2 \rightarrow T_1$ with kernel $F_1\cap F_2$. 

If one of the morphisms is nontrivial it must be injective as both sheaves are torsion free and the $F_i$ are of rank one. But this implies it has trivial kernel and thus $F_1\cap F_2=0$.

So assume both morphisms are zero. Then we get $F_1 \subseteq F_2\subseteq F_1$ and thus $F_1= F_2$.	
\end{proof}

The following theorem is based on \cite[Lemma 3.5, Proposition 3.6]{chan}:

\begin{thm}\label{semist}
Let $G$ be a coherent torsion free $\OO_{\oX}$-module of rank two with $G\in \Fix(\sigma)$, then $G$ is $\mu_{\oh}$-semistable for any polarization $h$ on $X$.
\end{thm}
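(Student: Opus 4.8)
The plan is to argue by contradiction, exploiting that $\sigma$ preserves $\mu_{\oh}$-slopes together with the parity constraint on ranks of $\sigma$-fixed sheaves established in the first lemma of this section. Suppose $G$ is not $\mu_{\oh}$-semistable. Since $G$ has rank two, there is a saturated rank one subsheaf $F_1\subset G$ with $\mu_{\oh}(F_1)>\mu_{\oh}(G)$ (for instance the maximal destabilizing subsheaf of the Harder--Narasimhan filtration, which is automatically saturated).

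The first key computation is that $\sigma$ preserves $\mu_{\oh}$-slope on rank one sheaves. For a rank one sheaf $F$ one has $\operatorname{c}_1(\sigma(F))=\iota^{*}\operatorname{c}_1(F)+\ell$. Since $\oh=q^{*}h$ satisfies $\iota^{*}\oh=\oh$ and $\iota^{*}$ is an isometry of the intersection form, $\iota^{*}\operatorname{c}_1(F)\cdot\oh=\operatorname{c}_1(F)\cdot\oh$. Moreover $\ell\cdot\oh=0$, because $\ell\in\Lambda^{-}\cap\NS(\oX)$ is anti-invariant while $\oh\in\Lambda^{+}$ is invariant and the two eigenspaces are orthogonal; equivalently, $\iota^{*}\ell=-\ell$ forces $\ell\cdot\oh=-\ell\cdot\oh$. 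Hence $\mu_{\oh}(\sigma(F))=\mu_{\oh}(F)$.

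Next, using $G\in\Fix(\sigma)$, fix an isomorphism $\phi:\sigma(G)\xrightarrow{\sim}G$. Applying the exact functor $\sigma$ to $F_1\hookrightarrow G$ and composing with $\phi$ produces a subsheaf $F_2:=\phi(\sigma(F_1))\subseteq G$. Since $\sigma$ (pullback by an automorphism followed by tensoring with a line bundle) preserves torsion-freeness of quotients, $\sigma(G)/\sigma(F_1)\cong\sigma(G/F_1)$ is torsion free, so $F_2$ is again a saturated rank one subsheaf; by the slope computation $\mu_{\oh}(F_2)=\mu_{\oh}(F_1)>\mu_{\oh}(G)$. I now apply Lemma \ref{subsheaf} to the two saturated rank one subsheaves $F_1,F_2\subseteq G$, splitting into two cases. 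If $F_1=F_2$, then $F_1\cong\sigma(F_1)$, so $F_1\in\Fix(\sigma)$; but then the rank lemma at the start of this section forces $\rk(F_1)$ to be even, contradicting $\rk(F_1)=1$. If instead $F_1\cap F_2=0$, the natural map $F_1\oplus F_2\to G$ is injective with torsion cokernel (both sides have rank two), whence $\operatorname{c}_1(G)\cdot\oh\geq(\operatorname{c}_1(F_1)+\operatorname{c}_1(F_2))\cdot\oh$ since $\oh$ is ample. This yields $2\mu_{\oh}(G)\geq\mu_{\oh}(F_1)+\mu_{\oh}(F_2)=2\mu_{\oh}(F_1)>2\mu_{\oh}(G)$, a contradiction. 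In either case $G$ is $\mu_{\oh}$-semistable.

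The only real input beyond Lemma \ref{subsheaf} and the rank lemma is the slope-invariance $\mu_{\oh}\circ\sigma=\mu_{\oh}$, which rests on the orthogonality $\ell\cdot\oh=0$ coming from the $\pm$-eigenspace decomposition of $\NS(\oX)$; I expect this to be the step where care is needed, since it is exactly what makes the destabilizing slope survive under $\sigma$. Once that is in place, the parity obstruction kills the case $F_1=F_2$ and a one-line degree inequality kills the case $F_1\cap F_2=0$, so neither destabilizing configuration can persist.
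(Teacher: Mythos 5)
Your proof is correct and follows essentially the same route as the paper: both use $\ell\cdot\oh=0$ to show $\sigma$ preserves slopes, apply Lemma \ref{subsheaf} to the pair of saturated rank one subsheaves $N$ and $\sigma(N)$, and in the transversal case conclude from the injection $N\oplus\sigma(N)\hookrightarrow G$ with torsion cokernel. The only (harmless) divergence is in excluding the case $N=\sigma(N)$: the paper derives a contradiction with Remark \ref{lincoh} via determinants, while you invoke the even-rank lemma; both arguments rest on the same underlying fact that $\ell$ cannot be written as $D-\iota^{*}D$ in $\NS(\oX)$.
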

\begin{proof}
Since $\ell$ is $\iota^{*}$-anti-invariant and $\oh$ is $\iota^{*}$-invariant we find
\begin{equation*}
\operatorname{c}_1(L)\oh=\ell\oh=0.
\end{equation*}
This implies for a torsion free sheaf $M$ of rank $r$:
\begin{equation}\label{conesigma}
\operatorname{c}_1(\sigma(M))\oh=\operatorname{c}_1(\iota^{*}M\otimes L)\oh=(\iota^{*}\operatorname{c}_1(M)+r\operatorname{c}_1(L))\oh=\operatorname{c}_1(M)\oh.
\end{equation}

To check semistability, it is enough to consider saturated rank one subsheaves, as $G$ has rank two. Let $N \hookrightarrow G$ be such subsheaf. Since $G$ is fixed under the involution $\sigma$ we find that $\sigma(N)\hookrightarrow G$ is also a saturated subsheaf of rank one.

It is impossible to have $N=\sigma(N)$ as subsheaves of $G$. Indeed this would imply that we have $\det(N) = \det(\sigma(N))$. But then
\begin{equation*}
\det(N) = \det(\sigma(N))\Leftrightarrow \det(N)\cong\iota^{*}\det(N)\otimes L \Leftrightarrow \det(N)\otimes \left(\iota^{*}\det(N) \right)^{-1}\cong L 
\end{equation*}
so that $L$ would be in image of $\id\otimes\left( \iota^{*}(-)\right)^{-1}$, which it is not by Remark \ref{lincoh}. 

So by Lemma \ref{subsheaf} we have $N\cap \sigma(N)=0$. Therefore there is an injection $N\oplus \sigma(N)\hookrightarrow G$. 

We compute slopes using \eqref{conesigma}:
\begin{equation*}
\mu_{\oh}(N\oplus\sigma(N))=\frac{\operatorname{c}_1(N\oplus\sigma(N))\oh}{2}=\operatorname{c}_1(N)\oh=\mu_{\oh}(N).
\end{equation*}
Since $N\oplus \sigma(N)$ is a rank two subsheaf of $G$ we also have 
\begin{equation*}
	\mu_{\oh}(N\oplus\sigma(N))\leqslant \mu_{\oh}(G),
\end{equation*}
see for example \cite[Lemma 4.3]{fried}. We conclude $\mu_{\oh}(N)\leqslant \mu_{\oh}(G)$ and $G$ is $\mu_{\oh}$-semistable.
\end{proof}

One may wonder if there are cases in which $G$, or more generally all semistable sheaves with the same numerical invariants as $G$, are in fact $\mu_{\oh}$-stable. To answer this question we start with the following lemma:

\begin{lem}\label{walls}
Let $h\in \NS(X)$ be any polarization on $X$, then $\oh\in \NS(\oX)$ is not on a wall of type $(2,\Delta)$ with $0< \Delta < -\ell^2$.
\end{lem}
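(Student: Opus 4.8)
The plan is to argue by contradiction using the orthogonal decomposition $\NS(\oX)=q^{*}\NS(X)\oplus\mathbb{Z}\ell$. Recall that, for the relevant first Chern class $c_1=\oD+\ell$, a wall of type $(2,\Delta)$ through $\oh$ is cut out by a class $\xi\in\NS(\oX)$ satisfying $\xi\cdot\oh=0$ and $-\Delta\leq\xi^2<0$, subject to the divisibility condition $\xi+\oD+\ell\in 2\NS(\oX)$ (so that $\xi=2c_1(F_1)-c_1$ arises from a destabilizing saturated rank one subsheaf $F_1$). So I would suppose such a $\xi$ exists for some $0<\Delta<-\ell^2$ and show this is impossible.

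First I would decompose $\xi=q^{*}\eta+m\ell$ with $\eta\in\NS(X)$ and $m\in\mathbb{Z}$. Since $q^{*}\NS(X)=\Lambda^{+}\cap\NS(\oX)$ and $\mathbb{Z}\ell=\Lambda^{-}\cap\NS(\oX)$ are the $(\pm1)$-eigenlattices of the isometry $\iota^{*}$, they are mutually orthogonal; combined with the degree two relations $(q^{*}\eta)^2=2\eta^2$ and $q^{*}\eta\cdot\oh=2\,\eta\cdot h$, this yields $\xi\cdot\oh=2\,\eta\cdot h$ and $\xi^2=2\eta^2+m^2\ell^2$. The key point is the divisibility condition: writing $c_1(F_1)=q^{*}\beta+k\ell$, the destabilizing class $\xi=2c_1(F_1)-(\oD+\ell)$ has $\ell$-coefficient $m=2k-1$, which is odd; hence $m^2\geq 1$.

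Finally I would invoke the Hodge index theorem. From $\xi\cdot\oh=0$ we get $\eta\cdot h=0$, and since $h$ is ample, $h^{\perp}$ is negative definite in $\NS(X)_{\mathbb{R}}$, so $\eta^2\leq 0$. Therefore
\[
\xi^2=2\eta^2+m^2\ell^2\leq m^2\ell^2\leq \ell^2,
\]
using $\ell^2<0$ and $m^2\geq1$. But the wall condition with $\Delta<-\ell^2$ forces $\xi^2\geq-\Delta>\ell^2$, a contradiction. Hence no such $\xi$ exists and $\oh$ lies on no wall of type $(2,\Delta)$ with $0<\Delta<-\ell^2$.

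I expect the main obstacle to be the bookkeeping around the parity/divisibility condition: it is precisely the oddness of the $\ell$-coefficient of $c_1=\oD+\ell$, together with the anti-invariance of $\ell$, that rules out the otherwise dangerous purely invariant classes (the case $m=0$), for which $\xi^2=2\eta^2$ could lie in the forbidden range between $\ell^2$ and $0$. Keeping the degree two normalization consistent in $(q^{*}\eta)^2=2\eta^2$ and $q^{*}\eta\cdot\oh=2\,\eta\cdot h$ is the other place to be careful.
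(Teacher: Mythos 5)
Your proof is correct and follows essentially the same route as the paper's: decompose $\xi$ along $q^{*}\NS(X)\oplus\mathbb{Z}\ell$, kill the invariant part with the Hodge index theorem, and bound $\xi^2$ from above by $\ell^2$ using the anti-invariant part. You are in fact more careful than the paper at one point: the paper's estimate $\xi^2=\oD^2+a^2\ell^2\leqslant\ell^2$ silently uses $a^2\geqslant 1$, whereas you justify the nonvanishing of the $\ell$-coefficient by the congruence $\xi\equiv \oD+\ell \pmod{2\NS(\oX)}$ coming from $\xi=2\operatorname{c}_1(F_1)-\operatorname{c}_1$, which forces that coefficient to be odd --- exactly the point needed to exclude the purely invariant classes with $-\Delta\leqslant 2\eta^2<0$ that would otherwise lie on a wall.
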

\begin{proof}
Recall (see \cite[Definition 4.C.1]{huy}) that a class $\xi\in \NS(\oX)$ is of type $(r,\Delta)$ if we have $-\frac{r^2}{4}\Delta \leqslant \xi^2 < 0$ and the wall $W_{\xi}$ of type $(r,\Delta)$
defined by $\xi$ is 
\begin{equation*}
W_{\xi}:=\left\lbrace [H]\in \mathcal{H}\,|\, \xi H=0 \right\rbrace.
\end{equation*}

Assume $\oh$ is on a wall of type $(2,\Delta)$. We have $\xi \oh=0$ for a class $\xi$ with $-\Delta\leqslant\xi^2 < 0$. Write $\xi=\overline{D}+a\ell$ for some $D\in \NS(X)$ and $a\in \mathbb{Z}$ then
\begin{equation*}
\xi \oh=0 \Leftrightarrow \oD \oh=0.
\end{equation*}
Using the Hodge Index theorem we find $\oD^2\leqslant 0$. It follows that 
\begin{equation*}
\xi^2 = (\overline{D}+a\ell)^2 = \oD^2+a^2\ell^2\leqslant \ell^2.
\end{equation*}
Thus if we have $\ell^2 < -\Delta < 0$ then $-\Delta\leqslant\xi^2<-\Delta$, a contradiction. Hence $\oh$ is not on a wall $W_{\xi}$ of type $(2,\Delta)$.

%
%
%
\end{proof}

%

We are now able to prove the $\mu_{\oh}$-stability of $G$ in some cases:

\begin{thm}\label{stable}
Let $G$ be a coherent torsion free $\mu_{\oh}$-semistable $\OO_{\oX}$-module. If $G$ has Mukai vector $v(G)=(2,\oD+\ell,\chi(G)-2)$ such that $0<v(G)^2+8<-\ell^2$, then $G$ is $\mu_{\oh}$-stable for any polarization $h\in \NS(X)$. 
\end{thm}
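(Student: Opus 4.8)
The statement asks to upgrade $\mu_{\oh}$-semistability (already guaranteed for rank-two $\sigma$-fixed sheaves by Theorem \ref{semist}, and assumed here) to $\mu_{\oh}$-stability under the numerical hypothesis $0<v(G)^2+8<-\ell^2$. The natural strategy is by contradiction: suppose $G$ is strictly $\mu_{\oh}$-semistable but not stable. Then there is a saturated rank-one subsheaf $N\hookrightarrow G$ with $\mu_{\oh}(N)=\mu_{\oh}(G)$, equivalently $\operatorname{c}_1(N)\oh=\tfrac12\operatorname{c}_1(G)\oh$. The plan is to show that such a destabilizing $N$ forces the class $\xi:=2\operatorname{c}_1(N)-\operatorname{c}_1(G)$ to define a wall of type $(2,\Delta)$ on which $\oh$ lies, and then invoke Lemma \ref{walls} to derive a contradiction from the numerical bound.

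First I would translate the destabilizing condition into the language of walls. Setting $\xi:=2\operatorname{c}_1(N)-\operatorname{c}_1(G)\in\NS(\oX)$, the equality of slopes gives exactly $\xi\oh=0$, which is the defining condition of the wall $W_\xi$. To apply Lemma \ref{walls} I must check that $\xi$ is of type $(2,\Delta)$ for some $\Delta$ with $0<\Delta<-\ell^2$, i.e. that $-\Delta\leq\xi^2<0$ for the relevant discriminant. The key computation is to relate $\xi^2$ to the Mukai self-intersection $v(G)^2$. Since $v(G)=(2,\operatorname{c}_1(G),\chi(G)-2)$, the Mukai pairing gives $v(G)^2=\operatorname{c}_1(G)^2-2\cdot 2\cdot\bigl(\chi(G)-2\bigr)$ up to the usual $+2r^2$ correction on a K3 surface; the standard Bogomolov-type identity expresses the discriminant $\Delta(G)=2rc_2-(r-1)c_1^2$ in terms of $v(G)^2$, and for a rank-two sheaf $4\Delta(G)=v(G)^2+8$ (the shift by $8=2r^2$ coming from the Mukai vector convention). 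I expect that $-\xi^2$ for a maximal destabilizing $N$ is bounded above by $\Delta(G)=\tfrac14(v(G)^2+8)$, giving $0\le-\xi^2\le\tfrac14(v(G)^2+8)<\tfrac14(-\ell^2)$, comfortably inside the forbidden range.

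The main obstacle is handling the two degenerate possibilities separately and making the discriminant bookkeeping precise. If $\xi^2=0$ then $\xi$ is not of type $(2,\Delta)$ for any $\Delta>0$, so I cannot directly cite Lemma \ref{walls}; instead I would argue that $\xi\oh=0$ together with $\xi^2=0$ forces $\xi$ numerically trivial by the Hodge Index theorem (as $\oh$ is a polarization and $\oh^2>0$), whence $\operatorname{c}_1(N)=\tfrac12\operatorname{c}_1(G)$ would already contradict integrality or the structure $\operatorname{c}_1(G)=\oD+\ell$ with $\ell$ primitive anti-invariant. The genuinely delicate point is verifying the inequality $-\xi^2\le\Delta(G)$, which is where the semistability of $G$ enters: for a saturated subsheaf $N$ of a semistable sheaf the discriminant of the associated two-term filtration is nonnegative, and unwinding this gives precisely the bound on $\xi^2$ in terms of $v(G)^2$. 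Once the bound $0<-\xi^2<-\ell^2$ is established, $\xi$ is of type $(2,\Delta)$ with $0<\Delta=-\xi^2<-\ell^2$, the wall $W_\xi$ contains $\oh$, and this contradicts Lemma \ref{walls}, completing the proof.
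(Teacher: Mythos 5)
Your plan is essentially the paper's argument with the black box opened up: the paper simply verifies the hypotheses of \cite[Theorem 4.C.3]{huy} (indivisibility of $\operatorname{c}_1(G)=\oD+\ell$, the identity $\Delta(G)=v(G)^2+8$, and Lemma \ref{walls}), whereas you re-derive that theorem by hand via the destabilizing class $\xi=2\operatorname{c}_1(N)-\operatorname{c}_1(G)$. The structure is sound, and your treatment of the degenerate case $\xi^2=0$ (Hodge index forces $\xi=0$, contradicting indivisibility of $\oD+\ell$ since the $\ell$-component is $1$) is exactly the indivisibility check the paper performs. One numerical slip: the relation between the discriminant $\Delta(G)=2rc_2-(r-1)\operatorname{c}_1^2$ and the Mukai square on a K3 is $\Delta(G)=v(G)^2+2r^2$, so for $r=2$ one has $\Delta(G)=v(G)^2+8$, not $4\Delta(G)=v(G)^2+8$. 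The correct Bogomolov-type bound coming from $0\to N\to G\to G/N\to 0$ with both rank-one pieces having nonnegative discriminant is $-\xi^2\leq \Delta(G)=v(G)^2+8$, which combined with the hypothesis $v(G)^2+8<-\ell^2$ still lands $\xi$ on a wall of type $(2,\Delta)$ with $0<\Delta<-\ell^2$, so the error is harmless for the conclusion --- but as written your chain of inequalities uses the wrong constant and should be corrected.
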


\begin{proof}
We check that all conditions of \cite[Theorem 4.C.3]{huy} are satisfied: as $\oX$ is a K3 surface we have $\NS(\oX)=\mathrm{Num}(\oX)$. The class $\operatorname{c}_1(G)=\oD+\ell$ is indivisible in $\NS(\oX)$ as $\ell$ is primitive and the summand $\oD$ comes from the orthogonal complement of $\ell$ in $\NS(\oX)$. 

A quick computation shows that the discriminant of $G$ is given by
\begin{equation*}
\Delta(G)=v(G)^2+8.
\end{equation*}
By Lemma \ref{walls} the polarization $\oh$ is not on a wall of type $(2,\Delta(G))$ for any polarization $h$ on $X$. It follows that every $\mu_{\oh}$-semistable sheaf with the given numerical invariants is actually $\mu_{\oh}$-stable.
\end{proof}

Denote the Mukai vector $v(G)=(2,\oD+\ell,\chi(G)-2)$ of $G$ simply by $v$ and let $\Mv$ be the moduli space of $\mu_{\oh}$-semistable sheaves on $\oX$ with Mukai vector $v$. If $0 < v^2+8 < -\ell^2$ then by Theorem \ref{stable} every $\mu_{\oh}$-semistable sheaf in $\Mv$ is $\mu_{\oh}$-stable. Thus in this case any polarization of the form $\oh$ is $v$-generic. 

As the first Chern class is indivisible by a well known result $\Mv$ is an irreducible holomorphic symplectic variety, deformation equivalent to $\Hilb^n(\oX)$ with $2n=v^2+2$, particularly $\Mv\neq \emptyset$. In the following we assume that we are in this situation. 

The involution $\sigma$ certainly preserves $\mu_{\oh}$-stability, that is if $G$ is $\mu_{\oh}$-stable, then so is $\sigma(G)=\iota^{*}G\otimes L$. This follows as $\iota^{*}G$ is slope-stable with respect to $\iota^{*}\oh=\oh$ and the tensor product with a line bundle does not affect stability. As $v$ is the Mukai vector of $G\in \Fix(\sigma)$ we have $v(\sigma(G))=v$ so that in fact the involution $\sigma$ restricts to an involution
\begin{equation*}
\sigma: \Mv \rightarrow \Mv,\,\,\,G\mapsto \sigma(G)=\iota^{*}G\otimes L.
\end{equation*}

Recall Mukai's construction of a holomorphic symplectic form on $\Mv$ using the Yoneda- (or cup-) product and the trace map, see \cite{muk} for more details:
\begin{equation*}
\Ext^1_{\oX}(G,G)\times \Ext^1_{\oX}(G,G) \xrightarrow{\cup} \Ext^2_{\oX}(G,G)\xrightarrow{\mathrm{tr}} \mathrm{H}^2(\oX,\OO_{\oX})\cong \mathbb{C}.
\end{equation*}
We see that there are the following isomorphisms for $i\geqslant 0$:
\begin{equation*}
\Ext^i_{\oX}(\sigma(G),\sigma(G))=\Ext^i_{\oX}(\iota^{*}G\otimes L,\iota^{*}G\otimes L)\cong \Ext^i_{\oX}(\iota^{*}G,\iota^{*}G).
\end{equation*} 
But $\iota^{*}$ is known to be antisymplectic with respect to Mukai's form, so $\sigma$ is also an antisymplectic involution. By a result of Beauville, see \cite[Lemma 1]{beau2}, it follows that $\Fix(\sigma)\subset \Mv$ is a smooth Lagrangian subscheme of dimension $n$ if it is not empty.

\begin{prop}
The fixed locus $\Fix(\sigma)$ in $\Mv$ is not empty.
\end{prop}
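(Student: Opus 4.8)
The plan is to produce a single explicit point of $\Fix(\sigma)$, after first observing that stability comes for free. Suppose I can exhibit \emph{any} coherent torsion-free $\OO_{\oX}$-module $G$ of rank two with $G\in\Fix(\sigma)$ and $v(G)=v$. Then by Theorem \ref{semist} such a $G$ is automatically $\mu_{\oh}$-semistable, and since $0<v^{2}+8<-\ell^{2}$, Theorem \ref{stable} upgrades this to $\mu_{\oh}$-stability. Hence $G$ defines a point of $\Mv$ lying in $\Fix(\sigma)$, and the proposition follows. So the whole problem reduces to constructing one torsion-free rank-two sheaf in $\Fix(\sigma)$ with the prescribed Mukai vector $v$; no stability estimate has to be carried out by hand.

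To build such a $G$ I will use the Morita machinery together with Corollary \ref{fix}. The source of $\iota^{*}$-fixed $\omA$-modules is pullback from $X$: for any left $\mA$-module $\mathcal{M}$ on $X$ one has $\iota^{*}q^{*}\mathcal{M}=q^{*}\mathcal{M}$ because $q\circ\iota=q$, so $q^{*}\mathcal{M}$ is fixed by $\iota^{*}$ and Corollary \ref{fix} gives $\Theta(q^{*}\mathcal{M})\in\Fix(\sigma)$. I will take $\mathcal{M}$ torsion free of generic rank one over the division ring $\mA_{\eta}$, i.e. of $\OO_{X}$-rank four. Since $q$ is \'etale (hence flat) and $\Theta$ is a Morita equivalence, $G:=\Theta(q^{*}\mathcal{M})$ is then torsion free of rank two.

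It remains to match Mukai vectors. Writing $\Theta(H)=F^{*}\otimes_{\omA}H$ and using $\operatorname{ch}(\Xi(E))=\operatorname{ch}(F)\operatorname{ch}(E)$ for the inverse functor $\Xi$, one gets $\operatorname{ch}(\Theta(q^{*}\mathcal{M}))=q^{*}\operatorname{ch}(\mathcal{M})/\operatorname{ch}(F)$ with $\operatorname{ch}(F)=(2,-\ell,\tfrac{1}{2}\ell^{2})$. A short computation then pins down the invariants of $\mathcal{M}$ forced by $v(G)=v=(2,\oD+\ell,\chi(G)-2)$: the $\OO_{X}$-rank is four, $\operatorname{c}_1(\mathcal{M})=2D$, and $\operatorname{ch}_2(\mathcal{M})$ is determined by $\chi(G)$. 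This is consistent, since the first Chern class so produced is automatically even: the pullback $H=q^{*}\mathcal{M}$ is a module over $\omA\cong\mathcal{E}nd_{\oX}(F)$ with $\Theta(H)$ of rank two, so that $\operatorname{c}_1(H)=\rk(\Theta(H))\operatorname{c}_1(F)+2\operatorname{c}_1(\Theta(H))=2(\operatorname{c}_1(\Theta(H))-\ell)\in 2\NS(\oX)$. Thus the required $\mA$-module Mukai vector $v_{\mA}$ is well defined, and the task is reduced to exhibiting one $\mA$-module with these invariants.

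For that final existence step I will invoke the construction of Hoffmann and Stuhler: by \cite{hoff} the moduli space $\mathrm{M}_{\mA/X}(v_{\mA})$ of torsion-free $\mA$-modules of generic rank one is non-empty for the relevant $v_{\mA}$, and any member $\mathcal{M}$ of it yields the desired $G=\Theta(q^{*}\mathcal{M})$. The main obstacle is precisely this last point: one must verify that the numerical data $v_{\mA}$ determined above falls within the range for which the Hoffmann--Stuhler spaces are non-empty --- this is where the hypothesis $0<v^{2}+8<-\ell^{2}$ is expected to enter --- and that the argument is genuinely non-circular, i.e. does not tacitly presuppose $\Fix(\sigma)\neq\emptyset$. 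I stress that some input of this kind is unavoidable: the twist by $L$ in $\sigma$ is essential, since the untwisted involution $\iota^{*}$ on a variety deformation equivalent to $\Hilb^{n}(\oX)$ can have empty fixed locus (for instance the Enriques involution acting on $\Hilb^{n}(\oX)$ with $n$ odd has no invariant length-$n$ subscheme), so no purely formal argument relying only on $\sigma$ being an antisymplectic involution can suffice.
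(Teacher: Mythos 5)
Your reduction to exhibiting a single $\sigma$-fixed torsion-free rank-two sheaf with Mukai vector $v$ is sound, and the Morita/pullback mechanism via Corollary \ref{fix} does produce elements of $\Fix(\sigma)$ from torsion-free $\mA$-modules of rank one. But the proof has a genuine gap exactly where you flag it: you never establish that a torsion-free $\mA$-module of rank one with the required invariants actually exists. The theorem of Hoffmann and Stuhler only asserts the existence of a projective moduli \emph{scheme}; it gives no nonemptiness statement for a prescribed $v_{\mA}$. Worse, in this paper the logical dependence runs the other way: Lemma \ref{nonemp} shows that $\M_{\mA/X}(v_{\mA})$ is nonempty \emph{if and only if} $\Fix(\sigma)$ is nonempty, and it is the nonemptiness of $\Fix(\sigma)$ that is used to produce points of the twisted Picard scheme, by applying $\Xi$ to a $\sigma$-fixed stable sheaf and descending. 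So your argument, completed along the lines you sketch, would be circular, and no independent existence result for such $\mA$-modules is available in the paper.

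The paper's own proof avoids any construction: from $\oD^2\equiv 0$ and $\ell^2\equiv 2\pmod{4}$ one computes $v^2+2\equiv 0\pmod{4}$, so $\Mv$ has dimension $2n$ with $n$ even and hence $\chi(\OO_{\Mv})=n+1$ is odd; a fixed-point-free involution would exhibit $\Mv$ as an \'etale double cover of its quotient and force $\chi(\OO_{\Mv})$ to be even, a contradiction. Note that this is precisely the kind of soft argument you assert cannot suffice. Your counterexample ($\iota^{*}$ acting on $\Hilb^n(\oX)$ with $n$ odd) is exactly the case where $\chi(\OO)$ is even and the parity obstruction vanishes; the point is that the twist by $L$ enters not through the geometry of $\sigma$ but through the congruence $\ell^2\equiv 2\pmod 4$, which is what makes $n$ even here. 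That mod-$4$ computation is the one essential input your proposal is missing.
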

\begin{proof}
We have $v=(2,\oD+\ell,\chi(G)-2)$. A computation shows
\begin{equation*}
v^2=(\oD+\ell)^2-4(\chi(G)-2)=\oD^2+\ell^2-4(\chi(G)-2)\equiv 2\pmod{4}
\end{equation*}
which follows from $\oD^2\equiv 0\pmod{4}$ and $\ell^2\equiv 2\pmod{4}$. Thus we have
\begin{equation*}
v^2+2\equiv 0\pmod{4}.
\end{equation*}

It is also well known that if $Y$ is a hyperk\"ahler manifold of dimension $2r$ then we have $\chi(\OO_Y)=r+1$. Thus in our case $\chi(\OO_{\Mv})=2k+1$ for some $k\in\mathbb{N}$.

Now if $\sigma$ were fixed point free it would induce an \'{e}tale double cover 
\begin{equation*}
\Mv \rightarrow \Mv/\left\langle \sigma \right\rangle.
\end{equation*}
But this would imply that $\chi(\OO_{\Mv})$ is even, a contradiction. So $\sigma$ must have fixed points. 
\end{proof}


\section{Twisted Picard schemes: smooth cases}
Let $X$ still be an Enriques surface with trivial Brauer map $q:\Br(X)\rightarrow \Br(\oX)$ as described in Section \ref{trivbr}. Denote the quaternion algebra representing the nontrivial element $\alpha\in \Br(X)$ by $\mathcal{A}$. As seen before, one has $\omA\cong \mathcal{E}nd_{\oX}(F)$. In this section we want to study Picard schemes of the noncommutative version $(X,\mA)$ of the classical pair $(X,\OO_X)$.

\begin{defi}
A sheaf $E$ on $X$ is called a generically simple torsion free $\mathcal{A}$-module if 
\begin{enumerate}
\item $E$ is coherent and torsion free as a $\mathcal{O}_X$-module and
\item $E$ is a left $\mathcal{A}$-module such that the generic stalk $E_{\eta}$ is a simple module over the $\mathbb{C}(X)$-algebra $\mathcal{A}_{\eta}$.
\end{enumerate}
Since in our case $\mathcal{A}_{\eta}$ is a division ring over $\mathbb{C}(X)$, $E$ is also called a torsion free $\mathcal{A}$-module of rank one.
\end{defi}

Choosing a polarization $h$ on $X$, Hoffmann and Stuhler showed that these modules are classified by a moduli space, more exactly we have (see \cite[Theorem 2.4. iii), iv)]{hoff}):

\begin{thm}
There is a projective moduli scheme $\M_{\mathcal{A}/X;\operatorname{c}_1,\operatorname{c}_2}$ classifying torsion free $\mathcal{A}$-modules of rank one with Chern classes $\operatorname{c}_1\in \NS(X)$ and $\operatorname{c}_2\in \mathbb{Z}$.
\end{thm}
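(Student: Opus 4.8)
The plan is to realize $\M_{\mathcal{A}/X;\operatorname{c}_1,\operatorname{c}_2}$ as a GIT quotient of a Quot scheme, running the classical construction of moduli of sheaves but $\mathcal{A}$-linearly; concretely this is Simpson's construction of moduli of $\Lambda$-modules applied to the finite locally free $\OO_X$-algebra $\Lambda=\mathcal{A}$. First I would fix the polarization $h$ and introduce (semi)stability for coherent torsion free left $\mathcal{A}$-modules via the reduced Hilbert polynomial $p_E$ of the underlying $\OO_X$-module, comparing $E$ only against its \emph{saturated} sub-$\mathcal{A}$-modules (those whose quotient is again torsion free). The key point is that a torsion free $\mathcal{A}$-module of rank one is then \emph{automatically stable}: since $\mathcal{A}_\eta$ is a division ring and $E_\eta$ is simple over it, there is no saturated submodule $0\subsetneq E'\subsetneq E$, because such an $E'$ would satisfy $0\subsetneq E'_\eta\subsetneq E_\eta$ and contradict simplicity. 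The same input shows $E$ is a \emph{simple} object: $\End_{\mathcal{A}}(E)$ is a finite dimensional $\mathbb{C}$-algebra which, via restriction to the generic point, embeds into the division ring $\End_{\mathcal{A}_\eta}(E_\eta)\cong\mathcal{A}_\eta^{op}$, and a finite dimensional domain over the algebraically closed field $\mathbb{C}$ is $\mathbb{C}$ itself. This simplicity is what guarantees that the eventual group action has only scalar stabilizers.

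Next I would prove boundedness of the family of such $E$ with fixed $\operatorname{c}_1,\operatorname{c}_2$. Automatic $\mathcal{A}$-stability bounds the slopes of $\OO_X$-subsheaves of $E$ (any destabilizing subsheaf can be enlarged to a sub-$\mathcal{A}$-module), so Grothendieck--Maruyama boundedness applies. Boundedness yields an $m\gg 0$ for which every such $E$ is $m$-regular; then $E(m)$ is globally generated by its $N:=h^0(E(m))=P_E(m)$ sections, giving an $\mathcal{A}$-linear surjection
\begin{equation*}
\mathcal{A}\otimes_{\OO_X}\OO_X(-m)^{\oplus N}\twoheadrightarrow E.
\end{equation*}
I would then take as parameter space the Quot scheme $\mathrm{Quot}_{\mathcal{A}}$ of $\mathcal{A}$-module quotients of $\mathcal{A}\otimes_{\OO_X}\OO_X(-m)^{\oplus N}$ with the prescribed Hilbert polynomial. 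This is the closed subscheme of the ordinary projective Quot scheme cut out by the condition that the universal quotient map be $\mathcal{A}$-linear, hence it is itself projective; the locus $R\subset\mathrm{Quot}_{\mathcal{A}}$ of points giving a generically simple torsion free $E$ with the correct Chern classes and with $H^0$ matching the trivialization is open.

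Finally I would form the quotient. The group $G=GL_N$ acts on $\mathrm{Quot}_{\mathcal{A}}$ by changing the trivialization, and two points of $R$ parametrize isomorphic $\mathcal{A}$-modules exactly when they lie in one $G$-orbit. Using the Grothendieck embedding of the Quot scheme into a Grassmannian and the Gieseker--Simpson linearization, I would identify $R$ with the set of GIT-(semi)stable points and form $R/\!\!/G$, a projective scheme. Since the objects are simple the stabilizers reduce to scalars, so the quotient is geometric and corepresents the moduli functor; this is $\M_{\mathcal{A}/X;\operatorname{c}_1,\operatorname{c}_2}$. Projectivity could alternatively be checked through the valuative criterion, the content being that a flat limit of generically simple torsion free $\mathcal{A}$-modules may be arranged to stay generically simple, torsion free and an $\mathcal{A}$-module.

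The main obstacle is the GIT step: translating $\mathcal{A}$-stability of modules into GIT-(semi)stability of the corresponding points of the Quot scheme, together with the uniform estimates (boundedness of the sub-$\mathcal{A}$-modules that can occur) needed to make the linearization and the resulting notion of stability independent of the auxiliary integer $m$. This is exactly the part of Simpson's argument that must be transported to the $\mathcal{A}$-linear setting; once it is in place, the remaining ingredients---$\mathcal{A}$-linearity being a closed condition, and simplicity forcing trivial stabilizers---are formal or copy the classical case verbatim.
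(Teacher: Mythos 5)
This theorem is not proved in the paper at all --- it is quoted directly from Hoffmann--Stuhler \cite[Theorem 2.4 iii), iv)]{hoff} --- and your outline is essentially the proof given there: generic simplicity over the division ring $\mathcal{A}_{\eta}$ makes every object automatically stable and simple, boundedness follows by enlarging destabilizing $\OO_X$-subsheaves to $\mathcal{A}$-submodules, the parameter space is an $\mathcal{A}$-linear Quot scheme with a $GL_N$-action having only scalar stabilizers, and properness is settled by the Langton-type valuative criterion you mention. So your proposal is correct and takes the same route as the cited source.
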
 

\begin{rem}
The moduli scheme $\M_{\mathcal{A}/X;\operatorname{c}_1,\operatorname{c}_2}$ can be thought of as a noncommutative Picard scheme $\Pic_{\operatorname{c}_1,\operatorname{c}_2}(\mA)$ for the pair $(X,\mA)$.
\end{rem}

In \cite{reede} we studied $\M_{\mathcal{A}/X;\operatorname{c}_1,\operatorname{c}_2}$ for an Enriques surface with nontrivial Brauer map by pulling everything back to $\oX$. This cannot work in this case as the pullback $\oE$ of a torsion free $\mathcal{A}$-module $E$ of rank one to $\oX$ is not a generically simple $\omA$-module anymore.

But using Morita equivalence we see that given a torsion free $\mathcal{A}$-module of rank one on $X$, we have $\oE\cong F\otimes \Theta(\oE)$ for the pullback $\oE$ on $\oX$. 

\begin{defi}
Let $S$ be an arbitrary smooth projective surface. Given an Azumaya algebra $\mathcal{B}$ on $S$ one we define the $\mathcal{B}$-Mukai vector for an $\mathcal{B}$-module $E$ by
\begin{equation*}
v_{\mathcal{B}}(E):=\operatorname{ch}(E)\sqrt{\operatorname{td}(S)}{\sqrt{\operatorname{ch}(\mathcal{B})}}^{-1}.
\end{equation*}
As in the case of $\mathcal{O}_S$-modules, it has the property that
\begin{equation*}
v_{\mathcal{B}}(E)^2=-\chi_{\mathcal{B}}(E,E)=\sum\limits_{i=0}^2 (-1)^{i+1} \dim_{\mathbb{C}}\left( \mathrm{E}xt^{i}_{\mathcal{B}}(E,E)\right).
\end{equation*}
\end{defi}

Instead of studying the moduli space $\M_{\mathcal{A}/X;\operatorname{c}_1,\operatorname{c}_2}$ we will consider the moduli space $\M_{\mathcal{A}/X}(v_{\mA})$ of torsion free $\mA$-modules of rank one with $\mA$-Mukai vector $v_{\mA}$ in the following.

By \cite[Proposition 3.5.]{hoff} we have the following form of Serre duality in this case:

\begin{prop}\label{serre}
Let $E_1$ and $E_2$ be coherent left $\mathcal{A}$-modules. There are the following isomorphisms for $0\leqslant i\leqslant 2$:
\begin{equation*}
\Ext^i_{\mathcal{A}}(E_1,E_2)\cong \Ext^{2-i}_{\mathcal{A}}(E_2,E_1\otimes\omega_{X})^{*}.
\end{equation*}
\end{prop}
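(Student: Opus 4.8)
The plan is to deduce the statement from ordinary Grothendieck--Serre duality on the smooth projective surface $X$ together with a sheaf-level self-duality of the $\mathcal{A}$-linear $R\mathcal{H}om$ that encodes the Azumaya structure of $\mathcal{A}$. Throughout I work in the bounded derived category $D^b_{\mathrm{coh}}(\OO_X)$ and write $(-)^{\vee}:=R\mathcal{H}om_{\OO_X}(-,\OO_X)$. Since $\mathcal{A}$ is Azumaya it is locally free over $\OO_X$, so for coherent left $\mathcal{A}$-modules $E_1,E_2$ the object $R\mathcal{H}om_{\mathcal{A}}(E_1,E_2)$ lies in $D^b_{\mathrm{coh}}(\OO_X)$ and is perfect (as $X$ is regular of dimension two), and one has the hyperext description $\Ext^i_{\mathcal{A}}(E_1,E_2)=\mathbb{H}^i(X,R\mathcal{H}om_{\mathcal{A}}(E_1,E_2))$.

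First I would record the form of Grothendieck--Serre duality on $X$ (dimension $2$, dualizing sheaf $\omega_X$): for any $\mathcal{G}\in D^b_{\mathrm{coh}}(\OO_X)$ there is a natural isomorphism $\mathbb{H}^i(X,\mathcal{G})^{*}\cong \mathbb{H}^{2-i}(X,\mathcal{G}^{\vee}\otimes\omega_X)$. Applying this to $\mathcal{G}=R\mathcal{H}om_{\mathcal{A}}(E_1,E_2)$ turns the left-hand side $\Ext^i_{\mathcal{A}}(E_1,E_2)^{*}$ into $\mathbb{H}^{2-i}\big(X,R\mathcal{H}om_{\mathcal{A}}(E_1,E_2)^{\vee}\otimes\omega_X\big)$. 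It therefore suffices to produce a natural isomorphism
\[
R\mathcal{H}om_{\mathcal{A}}(E_1,E_2)^{\vee}\cong R\mathcal{H}om_{\mathcal{A}}(E_2,E_1),
\]
because tensoring its target with the central line bundle $\omega_X$ gives $R\mathcal{H}om_{\mathcal{A}}(E_2,E_1\otimes\omega_X)$, whose hyperext in degree $2-i$ is exactly $\Ext^{2-i}_{\mathcal{A}}(E_2,E_1\otimes\omega_X)$.

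The heart of the argument, and the step I expect to be the main obstacle, is this sheaf-level self-duality, which is where the Azumaya hypothesis enters. I would construct the $\OO_X$-bilinear composition pairing
\[
R\mathcal{H}om_{\mathcal{A}}(E_1,E_2)\otimes^{L}_{\OO_X} R\mathcal{H}om_{\mathcal{A}}(E_2,E_1)\longrightarrow R\mathcal{H}om_{\mathcal{A}}(E_1,E_1)\xrightarrow{\ \tr\ }\OO_X,
\]
where the first arrow is composition of $\mathcal{A}$-linear morphisms and the second is the reduced trace on endomorphisms of an $\mathcal{A}$-module. The claim reduces to showing that the induced map $R\mathcal{H}om_{\mathcal{A}}(E_2,E_1)\to R\mathcal{H}om_{\mathcal{A}}(E_1,E_2)^{\vee}$ is an isomorphism. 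This is local in the \'etale topology, so I would check it where $\mathcal{A}\cong \M_r(\OO_X)$: there Morita equivalence identifies each $E_j$ with an $\OO_X$-module $V_j$ and the pairing with the standard composition-and-trace pairing $R\mathcal{H}om_{\OO_X}(V_1,V_2)\otimes^{L}R\mathcal{H}om_{\OO_X}(V_2,V_1)\to\OO_X$, which is perfect by the usual evaluation duality for perfect complexes on the regular scheme $X$. Being an isomorphism of perfect complexes is \'etale-local, so the globally defined pairing is perfect and the desired self-duality follows.

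Assembling the three ingredients, namely the hyperext description, the reduced-trace self-duality, and Grothendieck--Serre duality on $X$, yields $\Ext^i_{\mathcal{A}}(E_1,E_2)^{*}\cong\Ext^{2-i}_{\mathcal{A}}(E_2,E_1\otimes\omega_X)$ for $0\leqslant i\leqslant 2$, which (using finite-dimensionality) is equivalent to the asserted isomorphism. The only point requiring care beyond the local computation is the globalization of the trace pairing: here one uses that the reduced trace $\mathcal{A}\to\OO_X$ is defined globally and that the Azumaya condition makes the induced trace form nondegenerate, equivalently that $\mathcal{A}$ is a symmetric $\OO_X$-algebra with $\mathcal{A}\cong\mathcal{A}^{\vee}$ as bimodules. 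This intrinsic symmetric structure is precisely what makes the local isomorphisms glue into the natural transformation used above.
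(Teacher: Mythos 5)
The paper does not actually prove Proposition \ref{serre}: it is imported verbatim from \cite[Proposition 3.5]{hoff}, so there is no in-paper argument to compare against, only a citation. Your derivation is a correct self-contained proof, and of the same general kind as the cited one, namely a reduction of $\mathcal{A}$-linear duality to classical duality on $X$ using that $\mathcal{A}$ is \'etale-locally a matrix algebra: the hyperext description of $\Ext^i_{\mathcal{A}}$, Grothendieck--Serre duality applied to the perfect complex $R\mathcal{H}om_{\mathcal{A}}(E_1,E_2)$, and the self-duality $R\mathcal{H}om_{\mathcal{A}}(E_1,E_2)^{\vee}\cong R\mathcal{H}om_{\mathcal{A}}(E_2,E_1)$ established \'etale-locally through Morita equivalence do combine to give exactly the stated isomorphism after dualizing finite-dimensional vector spaces. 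Two points in your sketch deserve to be made explicit. First, boundedness (hence perfectness, $X$ being regular) of $R\mathcal{H}om_{\mathcal{A}}(E_1,E_2)$ rests on the fact that an Azumaya algebra over a regular scheme has the same finite global dimension as the base, so that $E_1$ locally admits a finite resolution by projective $\mathcal{A}$-modules, which are in particular locally free over $\OO_X$; you invoke regularity for perfectness but the finite resolution over $\mathcal{A}$ is the substantive input. Second, the ``reduced trace on endomorphisms of an $\mathcal{A}$-module'' needs an actual global construction when $E_1$ is not locally projective --- for instance the categorical trace $R\mathcal{H}om_{\mathcal{A}}(E_1,E_1)\to\mathcal{A}/[\mathcal{A},\mathcal{A}]$ followed by the reduced trace of $\mathcal{A}$, or the ordinary $\OO_X$-linear trace divided by the degree of $\mathcal{A}$, which is harmless in characteristic zero --- together with the verification that under an \'etale splitting of $\mathcal{A}$ it agrees, up to a nonzero scalar, with the standard trace on $R\mathcal{H}om_{\OO_X}(V_1,V_1)$; that identification is precisely what transports the local perfection of the evaluation pairing to your globally defined pairing. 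With those two points spelled out the argument is complete.
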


\begin{lem}\label{exts}
Let $E_1$ and $E_2$ be coherent left $\mA$-modules. There are the following isomorphisms for $0\leqslant i\leqslant 2$:
\begin{equation*}
\begin{aligned}
\Ext^i_{\omA}(\overline{E_1},\overline{E_2})&\cong \Ext^i_{\oX}(\Theta(\overline{E_1}),\Theta(\overline{E_2}))\\
\Ext^i_{\omA}(\overline{E_1},\overline{E_2})&\cong \Ext^i_{\mA}(E_1,E_2)\oplus \Ext^i_{\mA}(E_1,E_2\otimes\omega_X).
\end{aligned}
\end{equation*}  
\end{lem}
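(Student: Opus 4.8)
The plan is to prove the two isomorphisms separately, since they come from quite different sources: the first is a formal consequence of the Morita equivalence $\Theta$, whereas the second encodes the geometry of the finite \'etale double cover $q$.

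For the first line I would argue that $\Theta$ is an \emph{exact} equivalence of abelian categories. Both $\Theta(H)=F^{*}\otimes_{\omA}H$ and its quasi-inverse $\Xi(E)=F\otimes E$ are given by tensoring with (bi)modules and are exact, so $\Theta$ is an exact equivalence between $\Coh_l(\oX,\omA)$ and $\Coh(\oX)$. Any exact equivalence of abelian categories preserves Yoneda $\Ext$-groups, as these are intrinsic to the abelian category (equivalence classes of $i$-fold extensions). Applying this to $\oEa$ and $\oEb$ yields $\Ext^i_{\omA}(\oEa,\oEb)\cong \Ext^i_{\oX}(\tEa,\tEb)$ for all $i$, which is the first isomorphism.

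For the second line I would use the adjunction between pullback and pushforward along $q$, lifted to module categories. Since $\omA=q^{*}\mA$, pullback is a functor $q^{*}\colon \Coh_l(X,\mA)\to \Coh_l(\oX,\omA)$, and pushforward followed by restriction of scalars along the unit $\mA\to q_{*}\omA$ gives $q_{*}\colon \Coh_l(\oX,\omA)\to \Coh_l(X,\mA)$; these satisfy $q^{*}\dashv q_{*}$. As $q$ is finite, $q_{*}$ is exact, and as $q$ is \'etale, $q^{*}$ is exact; hence $q_{*}$ carries injective $\omA$-modules to injective $\mA$-modules. Applying $q_{*}$ to an injective resolution of $\oEb=q^{*}E_2$ and using the adjunction on $\Hom$ then gives $\Ext^i_{\omA}(\oEa,\oEb)=\Ext^i_{\omA}(q^{*}E_1,q^{*}E_2)\cong \Ext^i_{\mA}(E_1,q_{*}q^{*}E_2)$. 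Finally the projection formula identifies $q_{*}q^{*}E_2\cong E_2\otimes_{\OO_X}q_{*}\OO_{\oX}$ as $\mA$-modules, and since $q$ is the canonical cover with $\omega_X$ two-torsion one has $q_{*}\OO_{\oX}\cong \OO_X\oplus\omega_X^{-1}\cong \OO_X\oplus\omega_X$; so $q_{*}q^{*}E_2\cong E_2\oplus(E_2\otimes\omega_X)$, and additivity of $\Ext^i_{\mA}(E_1,-)$ produces the stated direct sum.

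The serious input is light; the main obstacle is bookkeeping in the second part. I would need to verify that the $\OO$-module adjunction $q^{*}\dashv q_{*}$ genuinely lifts to the $\mA$/$\omA$-module categories and derives correctly (i.e.\ that $q_{*}$ preserves injectives at the module level, so that $q_{*}$ of an injective resolution still computes $\Ext$), and that the projection-formula isomorphism $q_{*}q^{*}E_2\cong E_2\otimes q_{*}\OO_{\oX}$ respects the $\mA$-action, with $\mA$ acting through the first tensor factor. Both are routine but should each be justified in a line. The one genuinely geometric ingredient is the splitting $q_{*}\OO_{\oX}\cong \OO_X\oplus\omega_X$, and it is precisely this splitting that accounts for the appearance of the twist by $\omega_X$ in the second summand.
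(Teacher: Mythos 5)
Your proposal is correct and follows essentially the same route as the paper: the first isomorphism is Morita equivalence (you spell out why an exact equivalence preserves $\Ext$), and the second is the adjunction $\Ext^i_{\omA}(\oEa,\oEb)\cong\Ext^i_{\mA}(E_1,q_{*}q^{*}E_2)$ combined with the projection formula and $q_{*}\OO_{\oX}\cong\OO_X\oplus\omega_X$, which the paper justifies by citing Kuznetsov's Appendix D rather than rederiving the injective-preservation argument you sketch.
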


\begin{proof}
The first isomorphism is simply Morita equivalence. For the second isomorphism, we note that all classical relations between the various functors on $\OO_X$- and $\OO_{\oX}$-modules are also valid in the noncommutative case of $\mA$- and $\omA$-modules, see \cite[Appendix D]{kuz}. Especially we have isomorphisms
\begin{equation*}
\Ext^i_{\omA}(\overline{E_1},\overline{E_2})\cong \Ext^i_{\mA}(E_1,q_{*}q^*E_2)\,\,\,\,(0\leqslant i\leqslant 2).
\end{equation*}
Applying the projection formula for finite morphisms together with $q_{*}\OO_{\oX}\cong \OO_X\oplus \omega_X$ finally gives the second isomorphism.
\end{proof}

\begin{cor}\label{mukvec}
Let $E$ be a coherent left $\mA$-module, then 
\begin{equation*}
v(\Theta(\oE))^2=2v_{\mA}(E)^2
\end{equation*}
\end{cor}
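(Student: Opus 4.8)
The plan is to relate the two sides through the Euler characteristics that each Mukai-vector square computes, using the self-extension formula established in the lemmas above. First I would recall the defining properties of Mukai vectors. For an ordinary coherent sheaf $M$ on the K3 surface $\oX$ one has $v(M)^2 = -\chi_{\oX}(M,M)$, and for an $\mA$-module $E$ the analogous identity $v_{\mA}(E)^2 = -\chi_{\mA}(E,E)$ is recorded in the definition of the $\mathcal{B}$-Mukai vector. So the corollary is equivalent to the numerical identity $\chi_{\oX}(\Theta(\oE),\Theta(\oE)) = 2\,\chi_{\mA}(E,E)$, and the whole problem reduces to comparing these two Euler characteristics.

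The key input is Lemma \ref{exts}. Applying its first isomorphism with $E_1 = E_2 = E$ gives $\Ext^i_{\omA}(\oE,\oE) \cong \Ext^i_{\oX}(\Theta(\oE),\Theta(\oE))$ for each $i$, so that $\chi_{\omA}(\oE,\oE) = \chi_{\oX}(\Theta(\oE),\Theta(\oE))$. The second isomorphism of Lemma \ref{exts}, again with $E_1=E_2=E$, gives $\Ext^i_{\omA}(\oE,\oE) \cong \Ext^i_{\mA}(E,E) \oplus \Ext^i_{\mA}(E,E\otimes\omega_X)$. Taking the alternating sum of dimensions I obtain
\begin{equation*}
\chi_{\omA}(\oE,\oE) = \chi_{\mA}(E,E) + \chi_{\mA}(E,E\otimes\omega_X).
\end{equation*}

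It then remains to identify the second summand with the first, i.e.\ to show $\chi_{\mA}(E,E\otimes\omega_X) = \chi_{\mA}(E,E)$. I would deduce this from Serre duality for $\mA$-modules, Proposition \ref{serre}: applying it with $E_1 = E_2 = E$ yields $\Ext^i_{\mA}(E,E) \cong \Ext^{2-i}_{\mA}(E,E\otimes\omega_X)^{*}$, and taking the alternating sum of dimensions (the sign $(-1)^i$ being matched by $(-1)^{2-i}$) gives exactly $\chi_{\mA}(E,E) = \chi_{\mA}(E,E\otimes\omega_X)$. Combining the three displays, $\chi_{\oX}(\Theta(\oE),\Theta(\oE)) = \chi_{\omA}(\oE,\oE) = 2\,\chi_{\mA}(E,E)$, and translating Euler characteristics back into Mukai-vector squares finishes the proof.

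The main obstacle here is not any single step but making sure the bookkeeping of signs and duality is consistent: the content is entirely the two self-extension formulas combined with $\mA$-linear Serre duality, so the real work has already been done in Proposition \ref{serre} and Lemma \ref{exts}. I expect the argument to be short, with the only point requiring care being that Serre duality correctly cancels the $\omega_X$-twist at the level of Euler characteristics rather than of individual $\Ext$ groups.
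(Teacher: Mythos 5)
Your proposal is correct and follows exactly the paper's own argument: reduce to Euler characteristics, apply both isomorphisms of Lemma \ref{exts} with $E_1=E_2=E$, and use Serre duality (Proposition \ref{serre}) to identify $\chi_{\mA}(E,E\otimes\omega_X)$ with $\chi_{\mA}(E,E)$. No differences worth noting.
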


\begin{proof}
We have the following equalities:
\begin{align*}
v(\Theta(\oE))^2&=-\chi_{\oX}(\Theta(\oE),\Theta(\oE))=-\chi_{\omA}(\oE,\oE)\\
&=-\chi_{\mA}(E,E)-\chi_{\mA}(E,E\otimes\omega_X)=-2\chi_{\mA}(E,E)=2v_{\mA}(E)^2
\end{align*}
Here the second and third equality is Lemma \ref{exts}. The fourth equality is Serre duality for $\mA$-modules, see Proposition \ref{serre}.
\end{proof}

\begin{thm}
Let $E$ be a torsion free $\mA$-module of rank one, then $\Theta(\oE)$ is $\mu_{\oh}$-semistable. If $0 < 2v_{\mA}(E)^2+8 < -\ell^2$ then $\Theta(\oE)$ is $\mu_{\oh}$-stable.
\end{thm}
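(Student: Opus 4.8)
The plan is to deduce both assertions directly from the fixed-locus analysis of Section 2, once we recognise $\Theta(\oE)$ as a rank-two torsion-free sheaf lying in $\Fix(\sigma)$. Concretely, I would first show that $\Theta(\oE)$ belongs to $\Fix(\sigma)$ and has rank two; semistability then follows verbatim from Theorem \ref{semist}, and under the stated numerical hypothesis stability follows from Theorem \ref{stable}.

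The key point is that $\oE$ is $\iota^{*}$-invariant as an $\omA$-module. Since $E$ lives on $X$ and $\oE=q^{*}E$ with $q\circ\iota=q$, one has $\iota^{*}\oE=\iota^{*}q^{*}E=(q\iota)^{*}E=q^{*}E=\oE$, and this isomorphism respects the $\omA$-structure because $\iota^{*}\omA\cong\omA$. Corollary \ref{fix} then immediately gives $\Theta(\oE)\in\Fix(\sigma)$. For the rank, I would use that a torsion free $\mA$-module of rank one has $\OO_X$-rank four (as $\mA_\eta$ is a quaternion division algebra, hence four-dimensional over $\mathbb{C}(X)$), so $\oE$ has $\OO_{\oX}$-rank four; the Morita inverse $\Xi(G)=F\otimes G$ doubles ranks, so from $\oE\cong F\otimes\Theta(\oE)$ we read off $\rk\Theta(\oE)=2$. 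The same identity $\oE\cong F\otimes\Theta(\oE)$, with $F$ locally free, shows that any torsion subsheaf of $\Theta(\oE)$ would produce one in $\oE$; as $\oE=q^{*}E$ is torsion free, so is $\Theta(\oE)$. At this stage Theorem \ref{semist} applies and yields $\mu_{\oh}$-semistability for every polarization $h$ on $X$.

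For the second statement I would match the hypotheses of Theorem \ref{stable}. Being a rank-two torsion free sheaf in $\Fix(\sigma)$, the sheaf $\Theta(\oE)$ has Mukai vector of the form $v(\Theta(\oE))=(2,\oD+\ell,\chi(\Theta(\oE))-2)$ for some $D\in\NS(X)$, by the numerical description of $\sigma$-fixed sheaves obtained earlier in Section 2. Corollary \ref{mukvec} translates the self-intersection, namely $v(\Theta(\oE))^2=2\,v_{\mA}(E)^2$, so the assumption $0<2\,v_{\mA}(E)^2+8<-\ell^2$ reads exactly as $0<v(\Theta(\oE))^2+8<-\ell^2$. Thus all hypotheses of Theorem \ref{stable} hold, and $\Theta(\oE)$ is $\mu_{\oh}$-stable.

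The argument is essentially bookkeeping once the structural inputs are in place; the only real content is the first reduction. The step I expect to require the most care is verifying that $\iota^{*}\oE\cong\oE$ genuinely holds as an isomorphism of $\omA$-modules, and not merely of $\OO_{\oX}$-modules, since Corollary \ref{fix} is stated for $\iota^{*}$ acting on $\Coh_l(\oX,\omA)$; this rests on the $\iota^{*}$-invariance of $\omA=q^{*}\mA$ together with the compatibility of pullback along $q$ with the module structure. Everything downstream --- the rank count, torsion-freeness, the shape of the Mukai vector, and the numerical translation via Corollary \ref{mukvec} --- is then routine.
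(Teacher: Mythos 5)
Your proposal is correct and follows essentially the same route as the paper's own proof: establish $\Theta(\oE)\in\Fix(\sigma)$ via Corollary \ref{fix} (using that $\oE=q^{*}E$ is $\iota^{*}$-invariant, as noted in the remark following that corollary), read off rank two from the $\OO_X$-rank four of $E$, then invoke Theorem \ref{semist} for semistability and combine Corollary \ref{mukvec} with Theorem \ref{stable} for stability. You supply a few details the paper leaves implicit (torsion-freeness of $\Theta(\oE)$ and the shape of its Mukai vector), but the argument is the same.
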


\begin{proof}
Since $E$ is a torsion free $\mA$-module of rank one, it has rank four as an $\OO_{\oX}$-module, so $\Theta(\oE)$ has rank two. Now Lemma \ref{fix} shows that $\Theta(\oE)\in \Fix(\sigma)$ so it is $\mu_{\oh}$-semistable by Theorem \ref{semist}. Using Corollary \ref{mukvec} we have 
\begin{equation*}
0 < 2v_{\mA}(E)^2+8 < -\ell^2\,\, \Leftrightarrow\,\, 0 <v(\Theta(\oE))^2+8 < -\ell^2
\end{equation*}
which shows that $\Theta(\oE)$ is $\mu_{\oh}$-stable by Theorem \ref{stable}.
\end{proof}

The theorem shows that for certain numerical invariants we have a morphism
\begin{equation*}
\phi: \M_{\mathcal{A}/X}(v_{\mA}) \rightarrow \Mv,\,\,[E]\mapsto \left[ \Theta(\oE)\right] .
\end{equation*}

We already saw that $\Ima(\phi)\subset \Fix(\sigma)$ and that in this case the fixed locus is never empty. In fact we also have the reverse inclusion

\begin{lem}\label{nonemp}
Assume $0 < 2v_{\mA}^2+8 < -\ell^2$. Then $\M_{\mathcal{A}/X}(v_{\mA})$ is nonempty if and only if $\Fix(\sigma)$ is nonempty. Furthermore we have $\Fix(\sigma)\subset \Ima(\phi)$.
\end{lem}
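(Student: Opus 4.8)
The plan is to establish the nontrivial reverse inclusion $\Fix(\sigma)\subset\Ima(\phi)$ and then read off the equivalence from it. The implication $\M_{\mathcal{A}/X}(v_{\mA})\neq\emptyset\Rightarrow\Fix(\sigma)\neq\emptyset$ is already immediate: for $[E]\in\M_{\mathcal{A}/X}(v_{\mA})$ the sheaf $\Theta(\oE)$ is $\mu_{\oh}$-stable by the preceding theorem and lies in $\Fix(\sigma)$ by Corollary \ref{fix}, so $\phi([E])\in\Fix(\sigma)$; the converse implication is exactly the content of the inclusion. So I fix $G\in\Fix(\sigma)$. Under the standing hypothesis $0<2v_{\mA}^{2}+8<-\ell^{2}$, equivalently $0<v^{2}+8<-\ell^{2}$ for $v=v(G)$ by Corollary \ref{mukvec}, every sheaf of $\Mv$ is $\mu_{\oh}$-stable by Theorem \ref{stable}, hence $G$ is simple with $\End_{\oX}(G)=\mathbb{C}$. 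The goal is to produce a torsion free $\mA$-module $E$ of rank one with $v_{\mA}(E)=v_{\mA}$ and $\Theta(\oE)\cong G$.

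First I would transport $G$ across the Morita equivalence, setting $H:=\Xi(G)=F\otimes G\in\Coh_l(\oX,\omA)$, so that $\Theta(H)\cong G$ and $\End_{\omA}(H)\cong\End_{\oX}(G)=\mathbb{C}$; thus $H$ is simple as an $\omA$-module. Feeding $H$ into Lemma \ref{moritaiota} and using $G\in\Fix(\sigma)$ gives $\Theta(\iota^{*}H)\cong\sigma(\Theta(H))\cong\sigma(G)\cong G\cong\Theta(H)$, and applying the inverse equivalence $\Xi$ produces an isomorphism of $\omA$-modules $\psi_0\colon\iota^{*}H\xrightarrow{\sim}H$.

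The heart of the proof is a descent step along the \'{e}tale double cover $q\colon\oX\rightarrow X$ for the free action of $\mathbb{Z}/2\mathbb{Z}=\langle\iota\rangle$, compatible with the canonical descent datum on $\omA=q^{*}\mA$. The isomorphism $\psi_0$ is a candidate linearization, but I must correct it to satisfy the cocycle condition. By simplicity the composite $\psi_0\circ\iota^{*}\psi_0$ is a nonzero scalar $c\in\mathbb{C}^{*}$, and replacing $\psi_0$ by $\lambda\psi_0$ multiplies this scalar by $\lambda^{2}$ (since $\iota^{*}$ is $\mathbb{C}$-linear on Hom-spaces); choosing $\lambda^{2}=c^{-1}$, possible as $\mathbb{C}$ is algebraically closed, yields a genuine $\omA$-linearization. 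Equivalently, the obstruction lives in $\mathrm{H}^2(\mathbb{Z}/2\mathbb{Z},\mathbb{C}^{*})=0$ because $\mathbb{C}^{*}$ is divisible. Effectivity of finite \'{e}tale descent then gives a coherent left $\mA$-module $E$ on $X$ with $q^{*}E=\oE\cong H$, whence $\Theta(\oE)\cong\Theta(\Xi(G))\cong G$. I expect this to be the main obstacle, precisely because the rescaling and cocycle verification must be carried out $\omA$-linearly and compatibly with the descent datum of $\omA$.

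It remains to place $E$ in the correct moduli space. Torsion-freeness and the rank count descend from $H$, which has rank four over $\OO_{\oX}$ (as $G$ has rank two and $F$ rank two); since $\mathcal{A}_{\eta}$ is a division ring, the rank-four stalk $E_{\eta}$ is automatically a simple $\mathcal{A}_{\eta}$-module, so $E$ is a torsion free $\mA$-module of rank one. For the Mukai vector I would use that there are exactly two linearizations, differing by the class of $\mathrm{H}^1(\mathbb{Z}/2\mathbb{Z},\mathbb{C}^{*})=\mathbb{Z}/2\mathbb{Z}$; they yield the two descents $E$ and $E\otimes\omega_X$. Because $q^{*}\omega_X\cong\OO_{\oX}$ both pull back to $H$, so both satisfy $\Theta(q^{*}E)\cong\Theta(q^{*}(E\otimes\omega_X))\cong G$, while their $\mA$-Mukai vectors differ by the factor $\operatorname{ch}(\omega_X)=(1,\operatorname{c}_1(\omega_X),0)$, which is nontrivial since $\operatorname{c}_1(\omega_X)$ is the nonzero $2$-torsion class of $\NS(X)$. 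As $\operatorname{ch}(\oE)=\operatorname{ch}(F)\operatorname{ch}(G)$ is pinned down by $v=v(G)$ and $q^{*}$ determines $\operatorname{ch}(E)$ up to this same twist, $v_{\mA}(E)$ must be one of these two vectors; choosing the twist with $v_{\mA}(E)=v_{\mA}$ gives $[E]\in\M_{\mathcal{A}/X}(v_{\mA})$ with $\phi([E])=[G]$. This proves $\Fix(\sigma)\subset\Ima(\phi)$ and in particular $\M_{\mathcal{A}/X}(v_{\mA})\neq\emptyset$, completing the equivalence.
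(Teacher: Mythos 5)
Your argument is correct and follows the paper's proof essentially step for step: the forward direction is immediate, and for the reverse you transport $G\in\Fix(\sigma)$ to the simple $\omA$-module $H=\Xi(G)=F\otimes G$, show $\iota^{*}H\cong H$, and descend it to a rank-one torsion free $\mA$-module $E$ with $\oE\cong H$ --- the only difference being that the paper outsources the descent step to \cite[Theorem 2.6]{reede}, whereas you carry out the linearization/cocycle argument (rescaling $\psi_0$ by a square root of the scalar $\psi_0\circ\iota^{*}\psi_0$) explicitly, which is exactly what that cited theorem does. One harmless inaccuracy: the two descents $E$ and $E\otimes\omega_X$ do \emph{not} have different $\mA$-Mukai vectors, since $v_{\mA}$ lives in rational cohomology where $\operatorname{ch}(\omega_X)=1$ (indeed $\operatorname{rk}(E)\cdot\operatorname{c}_1(\omega_X)=4K_X=0$ already integrally), so there is no choice to make at the end --- both descents land in $\M_{\mathcal{A}/X}(v_{\mA})$.
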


\begin{proof}
As mentioned before if $[E]\in \M_{\mathcal{A}/X}(v_{\mA})$ then $\left[ \Theta(\oE)\right] \in \Fix(\sigma)\subset \Mv$.

So take $[G]\in \Fix(\sigma)\subset \Mv$. Then we have
\begin{equation*}
\sigma(G)\cong G\,\,\,\Leftrightarrow\,\,\,\iota^{*}G\cong G\otimes L^{-1}.
\end{equation*}
Define $H:=\Xi(G)=F\otimes G$. This is a left $\omA$-module and satisfies
\begin{equation*}
\End_{\omA}(H)\cong \End_{\oX}(G)\cong \mathbb{C},
\end{equation*}
using Morita equivalence and the simplicity of $G$ (as it is $\mu_{\oh}$-stable by our assumptions).

Furthermore we have the following isomorphism of $\omA$-modules:
\begin{equation*}
\iota^{*}H\cong  \iota^{*}F\otimes \iota^{*}G \cong  \left(F\otimes L \right)\otimes \left(G\otimes L^{-1} \right)\cong H.
\end{equation*}
By \cite[Theorem 2.6]{reede} we have $H\cong \oE$ for some torsion free $\mA$-module $E$ of rank one on $X$, so $\Theta(\oE)=G$, that is $[G]\in \Ima(\phi)$ and $\M_{\mathcal{A}/X}(v_{\mA})$ is not empty. 
\end{proof}

\begin{thm}
Assume $0 < 2v_{\mA}^2+8 < -\ell^2$. Then 
\begin{enumerate}[i)]
\item $\M_{\mathcal{A}/X}(v_{\mA})$ is smooth and an \'{e}tale double cover of $\Fix(\sigma)$.
\item The locus of locally projective $\mathcal{A}$-modules of rank one is dense in $\M_{\mathcal{A}/X}(v_{\mA})$.
\end{enumerate}
\end{thm}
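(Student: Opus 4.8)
The plan is to read everything off the $\Ext$-computations of Lemma \ref{exts} together with the stability of $\Theta(\oE)$. Fix a point $[E]\in\M_{\mathcal{A}/X}(v_{\mA})$ and set $G:=\Theta(\oE)$, which is $\mu_{\oh}$-stable by the previous theorem and hence simple. Then $\Ext^0_{\omA}(\oE,\oE)\cong\End_{\oX}(G)\cong\mathbb{C}$ by Morita equivalence, and the second isomorphism of Lemma \ref{exts} reads $\mathbb{C}\cong\Hom_{\mA}(E,E)\oplus\Hom_{\mA}(E,E\otimes\omega_X)$. Since $\id_E$ is a nonzero element of the first summand, I conclude $\Hom_{\mA}(E,E)\cong\mathbb{C}$ and $\Hom_{\mA}(E,E\otimes\omega_X)=0$. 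Feeding the latter vanishing into Serre duality (Proposition \ref{serre}) gives $\Ext^2_{\mA}(E,E)\cong\Hom_{\mA}(E,E\otimes\omega_X)^{*}=0$, so the obstruction space of the Hoffmann--Stuhler deformation problem vanishes and $\M_{\mathcal{A}/X}(v_{\mA})$ is smooth at $[E]$. Its dimension is $\dim\Ext^1_{\mA}(E,E)$, and combining Lemma \ref{exts} with the self-duality $\Ext^1_{\mA}(E,E)\cong\Ext^1_{\mA}(E,E\otimes\omega_X)^{*}$ shows $\dim\Ext^1_{\mA}(E,E)=\tfrac12\dim\Ext^1_{\oX}(G,G)=\tfrac12(v^2+2)=n=\dim\Fix(\sigma)$.

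Next I analyse $\phi$. By Lemma \ref{nonemp} the map is onto $\Fix(\sigma)$, and over $[G]\in\Fix(\sigma)$ the fibre consists of the torsion free $\mA$-modules $E$ with $\oE\cong\Xi(G)=:H$, i.e. of the descents of the simple $\omA$-module $H$ along the $\mathbb{Z}/2$-cover $q$. Such descents exist via \cite[Theorem 2.6]{reede}; because $H$ is simple, any two isomorphisms $\iota^{*}H\xrightarrow{\sim}H$ differ by a scalar, so the equivariant structures form a torsor under $\mu_2$ and the two descents differ exactly by the twist $E\mapsto E\otimes\omega_X$. These two modules lie in the same moduli space (twisting by the $2$-torsion $\omega_X$ leaves $v_{\mA}$ unchanged, since $E$ has $\OO_X$-rank divisible by $2$ and $K_X$ is numerically trivial) and are non-isomorphic precisely because $\Hom_{\mA}(E,E\otimes\omega_X)=0$; hence $\phi$ is set-theoretically $2:1$, with deck involution $E\mapsto E\otimes\omega_X$. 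To see that $\phi$ is étale I compare differentials: by Lemma \ref{moritaiota} the action of $\sigma$ on $T_{[G]}\Mv=\Ext^1_{\oX}(G,G)$ corresponds under Morita equivalence to the action of $\iota^{*}$ on $\Ext^1_{\omA}(\oE,\oE)$, whose $(+1)$-eigenspace is the summand $\Ext^1_{\mA}(E,E)$ arising from $q_{*}\OO_{\oX}=\OO_X\oplus\omega_X$. Thus $T_{[G]}\Fix(\sigma)$ is this eigenspace, while $d\phi$ is the composite of $q^{*}$ (the inclusion of that summand) with the Morita isomorphism, hence the isomorphism $\Ext^1_{\mA}(E,E)\xrightarrow{\sim}T_{[G]}\Fix(\sigma)$. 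As source and target are smooth of the same dimension $n$ and $\phi$ is proper with finite fibres, $\phi$ is a finite étale double cover, which proves (i).

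For (ii) I first record that $E$ is locally projective over $\mA$ if and only if $\oE$ is locally projective over $\omA$ (étale descent along $q$), if and only if $G=\Theta(\oE)$ is locally free (Morita equivalence). So the locally projective locus is the $\phi$-preimage of the locally free locus of $\Fix(\sigma)$, and by openness of local freeness it is open; I must show its complement is not dense. I stratify the non-locally-free locus $Z\subset\Fix(\sigma)$ by $m:=\operatorname{length}(G^{**}/G)\geq 1$, where $G^{**}$ is locally free (reflexive on a smooth surface), is again $\mu_{\oh}$-stable, and still lies in $\Fix(\sigma)$, now inside $\M_{\oX,\oh}(v')$ with $v'=v+m(0,0,1)$ and $v'^2=v^2-4m$. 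The crucial point is that $\sigma$-invariance of $Q:=G^{**}/G$, together with the fact that $\iota$ is fixed-point-free, forces the lengths of $Q$ at $p$ and at $\iota p$ to agree, so $m=2k$ is \emph{even}. The base $\Fix(\sigma)\cap\M_{\oX,\oh}(v')$ is Lagrangian of dimension $\tfrac12(v'^2+2)=n-2m$, while the $\sigma$-equivariant quotients $G^{**}\twoheadrightarrow Q$ of length $2k$ are governed by $k$ free $\iota$-orbits (dimension $2k$) together with $k$ choices of quotient line, one per orbit (dimension $k$), so this fibre has dimension at most $\tfrac32 m=3k$. Hence each stratum has dimension at most $n-2m+3k=n-k<n$, so $Z$ has dimension $<n=\dim\M_{\mathcal{A}/X}(v_{\mA})$ and the locally projective locus is dense.

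The \emph{main obstacle} is the dimension estimate in (ii). One has to bound the dimension of the $\sigma$-equivariant Quot-type parameter space uniformly in $m$, including clustered (non-reduced, punctual) configurations of $Q$, rather than only the generic reduced case described above; and one must verify that $G\mapsto G^{**}$ keeps one inside a moduli space to which Theorem \ref{stable} applies, which holds since $0<v'^2+8<v^2+8<-\ell^2$. The parity constraint $m\in 2\mathbb{Z}$, forced by the free involution $\iota$, is exactly what makes the base drop of $2m$ outrun the fibre growth of $\tfrac32 m$; without it the Lagrangian $\Fix(\sigma)$ could a priori be contained in the boundary divisor of non-locally-free sheaves.
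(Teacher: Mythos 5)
Your part (i) is essentially sound. The smoothness argument is the paper's own: $\mathbb{C}\cong\End_{\oX}(\tE)\cong\End_{\mA}(E)\oplus\Hom_{\mA}(E,E\otimes\omega_X)$ forces $\Hom_{\mA}(E,E\otimes\omega_X)=0$, and Serre duality then kills the obstruction space $\Ext^2_{\mA}(E,E)$. For the \'etale property you take a different but legitimate route: the paper shows $\varphi$ is unramified and $2\!:\!1$ by decomposing $\mathbb{C}\cong\Hom_{\omA}(\oEa,\oEb)\cong\Hom_{\mA}(E_1,E_2)\oplus\Hom_{\mA}(E_1,E_2\otimes\omega_X)$ and then quotes Schaps' lemma for flatness, whereas you identify $d\varphi$ with the inclusion of the $\iota^{*}$-invariant summand $\Ext^1_{\mA}(E,E)$ into $\Ext^1_{\oX}(\tE,\tE)$ and match it with $T_{[G]}\Fix(\sigma)$. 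Both work; yours buys a direct tangent-space picture at the cost of having to justify that the differential of $\phi$ really is pullback followed by Morita on $\Ext^1$, which you assert but do not prove.

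Part (ii) is where you genuinely diverge, and where the gap sits. The paper never stratifies the non-locally-free locus: it reduces density of the locally projective locus to the single vanishing $\Ext^2_{\mA}(E^{**},E)=0$, which makes the connecting map $\Ext^1_{\mA}(E,E)\to\Ext^2_{\mA}(T,E)$ surjective and lets the Hoffmann--Stuhler deformation argument deform any $E$ to a locally projective module; the vanishing itself follows from Serre duality together with $\Theta(\overline{E^{**}})\cong\Theta(\oE)^{**}$, which is again $\mu_{\oh}$-stable, hence simple, hence $\Hom_{\mA}(E,E^{**}\otimes\omega_X)=0$. Your dimension count is plausible in outline --- the parity of $m$ forced by the free involution and the inequality $n-2m+\tfrac32m<n$ are the right shape --- but, as you concede yourself, the estimate $\dim\leqslant\tfrac32m$ for the $\sigma$-equivariant Quot scheme is only checked on the open stratum of reduced quotients; a proof would need the bound $\dim\operatorname{Quot}(G^{**},k)\leqslant 3k$ including all punctual quotients, an actual halving statement for equivariant quotients under the free involution, and the bookkeeping for strata where $v'^2=v^2-4m$ becomes too small for Theorem \ref{stable} and the Lagrangian statement to apply. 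That is a real missing piece, not a routine verification, and it is entirely avoided by the paper's local, deformation-theoretic argument; I would replace your part (ii) by the $\Ext^2_{\mA}(E^{**},E)=0$ route.
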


\begin{proof}
The obstruction to smoothness of $\M_{\mathcal{A}/X}(v_{\mA})$ at a point $[E]$ lies in $\Ext^2_{\mA}(E,E)$, which is Serre dual to $\Hom_{\mA}(E,E\otimes\omega_X)^{*}$. Now by the stability of $\tE$ and Lemma \ref{exts} there are isomorphisms
\begin{equation*}
\mathbb{C}\cong\End_{\oX}(\tE)\cong \End_{\omA}(\oE)\cong \End_{\mA}(E)\oplus\Hom_{\mA}(E,E\otimes \omega_X).
\end{equation*}
As $E$ is a simple $\mA$-module, we have $\End_{\mA}(E)\cong\mathbb{C}$ so that $\Hom_{\mA}(E,E\otimes\omega_X)=0$. Therefore all obstructions vanish and the moduli space is smooth.

We have already seen that 
\begin{equation*}
\phi: \M_{\mathcal{A}/X}(v_{\mA}) \rightarrow \Mv,\,\,\, [E] \rightarrow \left[\tE \right]
\end{equation*}
factors through $\Fix(\sigma)$ and in fact by Lemma \ref{nonemp} we have $\Ima(\phi)=\Fix(\sigma)$. Thus $\phi$ induces a surjective morphism
\begin{equation*}
\varphi: \M_{\mathcal{A}/X}(v_{\mA}) \rightarrow \Fix(\sigma)
\end{equation*}
betweens smooth schemes.

Assume $\varphi(\left[ E_1\right] )=\varphi([E_2])$. That is we have an isomorphism $\tEa\cong \tEb$ and thus 
\begin{equation*}
\oEa \cong \Xi(\tEa)\cong \Xi(\tEb)\cong \oEb.
\end{equation*}
So we must have $E_1\cong E_2$ or $E_1\cong E_2\otimes\omega_X$ but not both as
\begin{equation*}
\mathbb{C}\cong \Hom_{\oX}(\tEa,\tEb)\cong \Hom_{\omA}(\oEa,\oEb)\cong \Hom_{\mA}(E_1,E_2)\oplus\Hom_{\mA}(E_1,E_2\otimes \omega_X).
\end{equation*}
It follows that the morphism $\varphi$ is unramified and $2:1$. By \cite[Lemma]{schaps} it is also flat, hence \'{e}tale.

To see that the locus of locally projective $\mathcal{A}$-modules is dense, similar to \cite[Theorem 4.10 (ii)]{reede}, it is enough to prove that $\Ext^2_{\mA}(E^{**},E)=0$. This vanishing implies that the connecting homomorphism
	\begin{equation*}
		\begin{tikzcd}
			\cdots \arrow[r] &\Ext_{\mA}^1(E,E) \arrow[r,"\delta"] & \Ext_{\mA}^2(T,E) \arrow[r] & \Ext_{\mA}^2(E^{**},E) \arrow[r] & \cdots
		\end{tikzcd}
	\end{equation*}	
of the long exact sequence we get after applying $\Hom_{\mA}(-,E)$ to the bidual sequence
	\begin{equation*}\label{bidual}
		\begin{tikzcd}
			0 \arrow[r] & E \arrow[r] & E^{**} \arrow[r] & T \arrow[r] & 0
		\end{tikzcd}
	\end{equation*}	
is surjective, which then allows to use the rest of the proof of \cite[Theorem 3.6. iii)]{hoff}. But $\Ext^2_{\mA}(E^{**},E)$ is Serre dual to $\Hom_{\mA}(E,E^{**}\otimes\omega_X)^{*}$. To prove the vanishing of the latter, we claim that there is an isomorphism 
\begin{equation*}
\Theta(\overline{E^{**}})\cong\Theta(\oE)^{**}.
\end{equation*}
Indeed we have following isomorphisms:
\begin{equation*}
F\otimes\Theta(\overline{E^{**}}) \cong \overline{E^{**}} \cong \oE^{**} \cong \left(F\otimes\Theta(\oE) \right)^{**}\cong F\otimes \Theta(\oE)^{**}.
\end{equation*}
Here the first isomorphism is Morita equivalence for $\overline{E^{**}}$, the second isomorphism is flatness of $q:\oX \rightarrow X$, the third is Morita equivalence for $\oE$ and the final isomorphism uses the locally freeness of $F$.

This isomorphism shows that $\Theta(\overline{E^{**}})$ is $\mu_{\oh}$-stable since $\Theta(\oE)$ is. Especially $\Theta(\overline{E^{**}})$ is simple as an $\OO_{\oX}$-module and hence so is $\overline{E^{**}}$ as an $\omA$-module. It follows from \cite[Lemma 1.7.]{reede} that we have $\Hom_{\mA}(E,E^{**}\otimes\omega_X)=0$.
\end{proof}

\section{Twisted Picard schemes: singular cases}
In this section we want to study the case that $[E]\in \M_{\mathcal{A}/X}(v_{\mA})$ is a singular point. This implies that 
\begin{equation*}
\Ext^2_{\mA}(E,E)\cong \Hom_{\mA}(E,E\otimes\omega_X)\cong \mathbb{C}.
\end{equation*} 
Especially there is an isomorphism of $\mA$-modules
\begin{equation*}
E\cong E\otimes \omega_X.
\end{equation*}
To study the structure of such $\mA$-modules we first prove a more general statement. For this we need some notation: let $W$ be a smooth projective variety together with an \'{e}tale Galois double cover $q: \overline{W}\rightarrow W$ with covering involution $\iota$. The Brauer-Severi variety of an Azumaya algebra $\mathcal{A}$ on $W$ is denoted by $p: Y\rightarrow W$. We get the following diagram with cartesian squares 
\begin{equation}\label{diag}
\begin{tikzcd}
\overline{Y} \arrow{r}{\overline{\iota}}\arrow{d}[swap]{\overline{p}} & \overline{Y} \arrow{r}{\overline{q}}\arrow{d}[swap]{\overline{p}} & Y \arrow{d}{p}\\
\overline{W} \arrow{r}{\iota} & \overline{W} \arrow{r}{q} & W
\end{tikzcd}
\end{equation}  
Here $\overline{q}: \overline{Y}\rightarrow Y$ is also an \'{e}tale Galois double cover with covering involution $\overline{\iota}$. Again, by \cite[8.4]{quill}, we have
\begin{equation*}
\mA_Y:=p^{*}\mathcal{A}\cong \mathcal{E}nd_Y(G)\,\,(\text{and thus}\,\, \mA\cong p_{*}\mathcal{E}nd_Y(G))
\end{equation*}
for a locally free sheaf $G$ on $Y$ which is compatible with base change and if $Y=\mathbb{P}(E)$, i.e. $\mA=\mathcal{E}nd_W(E)$, we have $G=p^{*}E\otimes \OO_Y(-1)$.


Then we have the following equivalences
\begin{align*}
\phi: \Coh_l(W,\mathcal{A}) \rightarrow \Coh(Y,W),\,\,\, & E\mapsto G^{*}\otimes_{\mathcal{A}_Y}p^{*}E\\
\psi: \Coh(Y,W) \rightarrow \Coh_l(W,\mathcal{A}),\,\,\, & E\mapsto p_{*}(G\otimes E) 
\end{align*}
with
\begin{equation*}
\Coh(Y,W)=\left\lbrace E\in \Coh(Y)\,|\, p^{*}p_{*}(G\otimes E)\xrightarrow{\,\cong\,} G\otimes E \right\rbrace.
\end{equation*}
We have similar equivalences $\overline{\phi}$ and $\overline{\psi}$ involving $\overline{\mathcal{A}}_{\overline{Y}}\cong \mathcal{E}nd_{\overline{Y}}(\overline{q}^{*}G)$, $\overline{Y}$ and $\overline{W}$.

\begin{rem}\label{eqmor}
If $\mA=\mathcal{E}nd_W(E)$ is trivial, i.e. $Y=\mathbb{P}(E)$, we can compose the equivalences $\phi$ and $\psi$ with Morita equivalence and get the following equivalences, using the isomorphism $G\cong p^{*}E\otimes\OO_Y(-1)$:
\begin{align*}
\Coh(W) \rightarrow \Coh(Y,W),\,\,\, & H\mapsto p^{*}H\otimes \OO_Y(1)\\
\Coh(Y,W) \rightarrow \Coh(W),\,\,\, & H\mapsto p_{*}(H\otimes \OO_Y(-1)) 
\end{align*}
with
\begin{equation*}
\Coh(Y,W)=\left\lbrace H\in \Coh(Y)\,|\, p^{*}p_{*}(H\otimes \OO_Y(-1))\xrightarrow{\,\cong\,} H\otimes \OO_Y(-1) \right\rbrace.
\end{equation*}
\end{rem}

\begin{lem}\label{push}
If for $M\in \Coh(Y,W)$ there is $N\in \Coh(\overline{Y})$ such that $M\cong \overline{q}_{*}N$ then $N\in \Coh(\overline{Y},\overline{W})$
\end{lem}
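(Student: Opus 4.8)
The plan is to reduce everything to the statement that a sheaf lies in $\Coh(Y,W)$ (resp.\ $\Coh(\oY,\oW)$) precisely when, after twisting by $G$ (resp.\ $\oq^{*}G$), it is pulled back from the base. Indeed, the counit $p^{*}p_{*}(G\otimes E)\to G\otimes E$ is an isomorphism if and only if $G\otimes E\cong p^{*}B$ for some $B$ on $W$: one direction takes $B=p_{*}(G\otimes E)$, and the converse follows from the triangle identity of the adjunction $(p^{*},p_{*})$ together with $p_{*}\OO_Y\cong \OO_W$, which holds because $p$ is a Brauer-Severi variety. The same reformulation holds over $\oW$ for $\op:\oY\to\oW$ and $\oq^{*}G$. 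Thus it suffices to prove that $\oq^{*}G\otimes N$ is pulled back from $\oW$.

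First I would compute $\oq^{*}(G\otimes M)$ in two ways. Since $M\in\Coh(Y,W)$, write $G\otimes M\cong p^{*}B$; the right-hand square of \eqref{diag} gives $p\circ\oq=q\circ\op$, so
\begin{equation*}
\oq^{*}(G\otimes M)\cong \oq^{*}p^{*}B\cong \op^{*}q^{*}B
\end{equation*}
is pulled back from $\oW$. On the other hand, using $M\cong\oq_{*}N$, the projection formula for the finite map $\oq$ (with $G$ locally free) yields $G\otimes M\cong\oq_{*}(\oq^{*}G\otimes N)$, and the splitting $\oq^{*}\oq_{*}(-)\cong(-)\oplus\overline{\iota}^{*}(-)$ for the \'{e}tale Galois double cover $\oq$ gives
\begin{equation*}
\oq^{*}(G\otimes M)\cong (\oq^{*}G\otimes N)\oplus \overline{\iota}^{*}(\oq^{*}G\otimes N)\cong (\oq^{*}G\otimes N)\oplus(\oq^{*}G\otimes \overline{\iota}^{*}N),
\end{equation*}
where the last isomorphism uses $\oq\circ\overline{\iota}=\oq$. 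Comparing the two expressions, the sheaf $(\oq^{*}G\otimes N)\oplus(\oq^{*}G\otimes\overline{\iota}^{*}N)$ is pulled back from $\oW$.

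Finally I would use that the class of sheaves pulled back from $\oW$ is closed under direct summands: being pulled back is equivalent to the counit $\op^{*}\op_{*}(-)\to(-)$ being an isomorphism, and since $\op_{*}$ preserves finite direct sums and the counit is natural, the counit of a finite direct sum is an isomorphism if and only if it is so on each summand. Applying this to the displayed splitting shows that $\oq^{*}G\otimes N$ is pulled back from $\oW$, which is exactly the condition $N\in\Coh(\oY,\oW)$.

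I expect the only real subtlety to be the equivalence used throughout — that the counit-isomorphism condition defining these categories coincides with being pulled back from the base — and its compatibility with $\oq^{*}$. Phrasing the membership condition as ``pulled back from the base'' is precisely what lets me avoid chasing the base-change comparison of the natural transformations by hand; the remaining ingredients (the projection formula, the $\oq^{*}\oq_{*}$ splitting for an \'{e}tale double cover, and additivity of the counit condition) are routine.
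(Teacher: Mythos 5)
Your argument is correct, but it follows a genuinely different route from the paper. The paper works directly with the defining counit condition: since $\oq$ is finite, $\oq_{*}$ is conservative, and a chain of identifications (flat base change $\oq_{*}\op^{*}\cong p^{*}q_{*}$, commutativity $q_{*}\op_{*}\cong p_{*}\oq_{*}$, and the projection formula $\oq_{*}(\oq^{*}G\otimes N)\cong G\otimes M$) turns $\oq_{*}$ of the counit for $\oq^{*}G\otimes N$ into the counit $p^{*}p_{*}(G\otimes M)\to G\otimes M$, which is an isomorphism by hypothesis. You instead reformulate membership in $\Coh(Y,W)$ as ``$G\otimes E$ is pulled back from the base,'' compute $\oq^{*}(G\otimes M)$ in two ways to exhibit $(\oq^{*}G\otimes N)\oplus\overline{\iota}^{*}(\oq^{*}G\otimes N)$ as a pullback from $\oW$, and conclude by closure of the counit condition under direct summands. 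Both are valid. The paper's version is shorter and only needs conservativity plus standard base-change compatibilities; yours additionally invokes the Galois splitting $\oq^{*}\oq_{*}\cong\id\oplus\,\overline{\iota}^{*}$ (which the paper only uses later, in Lemma \ref{descrip}) and, more importantly, the converse direction of your reformulation, namely that $G\otimes E\cong p^{*}B$ forces the counit to be an isomorphism. That step requires the unit $B\to p_{*}p^{*}B$ to be an isomorphism for \emph{arbitrary} coherent $B$, not just locally free ones; this is true for a Brauer--Severi fibration (check after flat base change to an \'etale cover trivializing $Y$, or resolve $B$ by locally free sheaves and use $Rp_{*}\OO_Y\cong\OO_W$), but it is the one place where your proof leans on a fact that deserves explicit justification, as you yourself note. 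With that point filled in, your direct-summand argument is clean and arguably more conceptual, at the cost of being slightly longer than the paper's.
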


\begin{proof}
We have to prove that the canonical morphism
\begin{equation*}
\phi: \op^{*}\op_{*}(\oq^{*}G\otimes N)\rightarrow \oq^{*}G\otimes N
\end{equation*}
is an isomorphism. But the morphism $\oq: \overline{Y}\rightarrow Y$ is finite which implies that the (underived) direct image functor $\oq_{*}$ is conservative, that is we have
\begin{equation*}
\phi\,\,\text{is an isomorphism}\,\, \Leftrightarrow\,\, \oq_{*}(\phi)\,\,\text{is an isomorphism}.
\end{equation*}
Using the flatness of $\op$ and \cite[Proposition 12.6]{wed}, diagram \ref{diag} and the projection formula, we find the following chain of isomorphisms
\begin{alignat*}{2}
&\oq_{*}\op^{*}\op_{*}(\oq^{*}G\otimes N)&&\rightarrow \oq_{*}\left( \oq^{*}G\otimes N\right) \\
\cong\,\,\,\, & p^{*}q_{*}\op_{*}(\oq^{*}G\otimes N)&&\rightarrow \oq_{*}\left( \oq^{*}G\otimes N\right) \\
\cong\,\,\,\, & p^{*}p_{*}\oq_{*}(\oq^{*}G\otimes N)&&\rightarrow \oq_{*}\left( \oq^{*}G\otimes N\right) \\
\cong\,\,\,\, & p^{*}p_{*}(\left( G\otimes \oq_{*}N\right))&&\rightarrow G\otimes \left( \oq_{*}N\right)  \\
\cong\,\,\,\, & p^{*}p_{*}(G \otimes M)&&\rightarrow G \otimes M.
\end{alignat*}
But $M\in \Coh(Y,W)$, so the last morphism is an isomorphism. But then so is the first, which is $\oq_{*}(\phi)$ and hence also $\phi$. Thus $N\in \Coh(\overline{Y},\overline{W})$.
\end{proof}

Now we return to our special situation. That is $W=X$ is an Enriques surface with trivial Brauer map as in Section \ref{trivbr}, $\oW=\oX$ the covering K3 surface, $Y$ is the Brauer-Severi variety of the Azumaya algebra $\mA$ corresponding to the nontrivial class $\alpha\in \Br(X)$. By the triviality of the Brauer map we have $\omA=\mathcal{E}nd_{\oX}(F)$ and therefore $\oY\cong\mathbb{P}(F)$.
\begin{lem}\label{isomline}
There is an isomorphism of line bundles
\begin{equation*}
\overline{\iota}^{*}\OO_{\oY}(1)\cong \OO_{\oY}(1)\otimes \op^{*}L
\end{equation*}
\end{lem}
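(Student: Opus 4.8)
The plan is to use that $\oY\cong\mathbb{P}(F)$ is a $\mathbb{P}^1$-bundle over $\oX$ via $\op$, so that $\Pic(\oY)=\op^{*}\Pic(\oX)\oplus\mathbb{Z}\cdot[\OO_{\oY}(1)]$. First I would observe that $\overline{\iota}$ is only a relative automorphism over the involution $\iota$: by construction $\op\circ\overline{\iota}=\iota\circ\op$, so $\overline{\iota}$ carries the fibre of $\op$ over a point $x$ isomorphically onto the fibre over $\iota(x)$. Consequently $\overline{\iota}^{*}\OO_{\oY}(1)$ again has relative degree one along the fibres of $\op$, and the decomposition of $\Pic(\oY)$ forces
\[
\overline{\iota}^{*}\OO_{\oY}(1)\cong\OO_{\oY}(1)\otimes\op^{*}M
\]
for a unique line bundle $M\in\Pic(\oX)$. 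The whole problem is thus reduced to identifying $M$.

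To pin down $M$ I would push the previous isomorphism forward along $\op$. Using the convention $\op_{*}\OO_{\oY}(1)\cong F$ together with the projection formula, the right-hand side gives $\op_{*}(\OO_{\oY}(1)\otimes\op^{*}M)\cong F\otimes M$. On the other hand, since $\overline{\iota}$ is an involution we may rewrite $\overline{\iota}^{*}=\overline{\iota}_{*}$, and the relation $\op\circ\overline{\iota}=\iota\circ\op$ yields
\[
\op_{*}\overline{\iota}^{*}\OO_{\oY}(1)\cong\iota_{*}\op_{*}\OO_{\oY}(1)\cong\iota^{*}F.
\]
Now I invoke the key computation already carried out in the proof of Lemma \ref{moritaiota}, namely $\iota^{*}F\cong F\otimes L$ (which rests only on $\iota^{*}L\cong L^{-1}$ and $F\cong\OO_{\oX}\oplus L^{-1}$). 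Combining the two sides gives the isomorphism $F\otimes M\cong F\otimes L$.

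Finally I would extract $M\cong L$ from $F\otimes M\cong F\otimes L$ by taking determinants: since $F$ has rank two, $\det(F\otimes M)=\det F\otimes M^{\otimes 2}$, so the isomorphism forces $M^{\otimes 2}\cong L^{\otimes 2}$, i.e. $(M\otimes L^{-1})^{\otimes2}\cong\OO_{\oX}$. As $\oX$ is a K3 surface its Picard group is torsion free, hence $M\cong L$, which is exactly the claimed formula.

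The step I expect to be most delicate is the bookkeeping of conventions in the middle paragraph: one must fix the normalization of $\OO_{\oY}(1)$ (equivalently, whether $\op_{*}\OO_{\oY}(1)$ is $F$ or $F^{*}$) consistently with the identification $\oY\cong\mathbb{P}(F)$ used elsewhere, and one must be careful that for the involution $\iota$ one indeed has $\iota_{*}\cong\iota^{*}$; a sign error here would replace $L$ by $L^{-1}$. Everything else is formal.
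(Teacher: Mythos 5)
Your proof is correct, but it takes a genuinely different route from the paper's. The paper factors $\overline{\iota}$ explicitly as the composition $\mathbb{P}(F)\xrightarrow{\beta}\mathbb{P}(F\otimes L)\cong\mathbb{P}(\iota^{*}F)\xrightarrow{\alpha}\mathbb{P}(F)$ of the canonical twist isomorphism and the base-change map, and reads off how $\OO_{\oY}(1)$ transforms under each factor from two cited facts in G\"ortz--Wedhorn. You instead use the decomposition $\Pic(\oY)\cong\op^{*}\Pic(\oX)\oplus\mathbb{Z}\cdot[\OO_{\oY}(1)]$ to reduce the problem to identifying a single line bundle $M$ on the base, and then pin $M$ down by pushing forward along $\op$ and comparing with $\iota^{*}F\cong F\otimes L$. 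Both arguments rest on the same two inputs: the isomorphism $\iota^{*}F\cong F\otimes L$ (established in the proof of Lemma \ref{moritaiota}) and the Grothendieck normalization of $\OO_{\oY}(1)$, i.e.\ $\op_{*}\OO_{\oY}(1)\cong F$, which is indeed the convention of the paper's reference \cite{wed} --- so the sign worry you flag is resolved in your favour; the paper makes this convention-dependence explicit by quoting \cite[Remarks 13.27, 13.35]{wed}, whereas you encode it in the pushforward formula. One small extra ingredient in your version: the last step, extracting $M\cong L$ from $M^{\otimes 2}\cong L^{\otimes 2}$, uses that $\Pic(\oX)$ is torsion free; this is automatic for the K3 surface $\oX$, but it is an input the paper's direct factorization does not need. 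Your approach has the advantage of generalizing immediately to $\mathbb{P}^{n}$-bundles without tracking the explicit factorization of the lifted involution.
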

\begin{proof}
Note that the induced involution $\overline{\iota}: \oY \rightarrow \oY$ actually factorizes in the following way, using the isomorphism $\oY\cong \mathbb{P}(F)$:
\begin{equation*}\label{diag2}
\begin{tikzcd}
\oY\cong\mathbb{P}(F)\arrow[bend left=25]{rr}{\overline{\iota}}\arrow{r}{\,\,\beta} & \mathbb{P}(F\otimes L)\cong\mathbb{P}(\iota^{*}F)\arrow{r}{\alpha} & \mathbb{P}(F)\cong\oY
\end{tikzcd}
\end{equation*}  
Here $\alpha: \mathbb{P}(\iota^{*}F) \rightarrow \mathbb{P}(F)$ is induced by the base change along the involution $\iota: \oX \rightarrow \oX$, which by \cite[Remark 13.27]{wed} implies
\begin{equation*}
\alpha^{*}\OO_{\oY}(1)=\OO_{\mathbb{P}(\iota^{*}F)}(1)
\end{equation*}
Furthermore as $\mathbb{P}(\iota^{*}F)\cong\mathbb{P}(F\otimes L)$ the map $\beta: \mathbb{P}(F)\rightarrow \mathbb{P}(F\otimes L)$ is the canonical $\oX$-isomorphism described in \cite[Remark 13.35]{wed} with 
\begin{equation*}
\beta^{*}\OO_{\mathbb{P}(F\otimes L)}(1)\cong \OO_{\mathbb{P}(F)}(1)\otimes \op^{*}L\cong \OO_{\oY}(1)\otimes \op^{*}L.
\end{equation*}
Putting both facts together gives the desired isomorphism of line bundles.
\end{proof}

\begin{lem}\label{descrip}
Let $E$ be a coherent left $\mA$-module such that there is an isomorphism of $\mA$-modules $E\cong E\otimes \omega_{X}$. Then there is a coherent sheaf $B$ on  $\oX$ such that $\Theta(\oE)\cong B\oplus\sigma(B)$.
\end{lem}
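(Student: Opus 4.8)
The plan is to show that the hypothesis $E\cong E\otimes\omega_X$ forces $E$ to be the pushforward $q_{*}B'$ of a coherent left $\omA$-module $B'$ on $\oX$; once this is established, the asserted splitting of $\Theta(\oE)$ falls out of flat base change together with Lemma~\ref{moritaiota}. Throughout I use that $E$, being a torsion free $\mA$-module of rank one, is simple in the sense that $\End_{\mA}(E)\cong\mathbb{C}$: any nonzero $\mA$-endomorphism restricts to a nonzero, hence invertible, endomorphism of the simple $\mA_{\eta}$-module $E_{\eta}$ over the division ring $\mA_{\eta}$, so $\End_{\mA}(E)$ is a finite-dimensional domain over $\mathbb{C}$, hence a finite-dimensional division algebra over the algebraically closed field $\mathbb{C}$, hence $\mathbb{C}$ itself.

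The first and main step is to upgrade the given isomorphism into descent data. Fix an $\mA$-linear isomorphism $f\colon E\to E\otimes\omega_X$ and a trivialization $\omega_X^{\otimes 2}\cong\OO_X$ (recall $\omega_X$ is $2$-torsion). Then $(f\otimes\id_{\omega_X})\circ f$ is an $\mA$-automorphism of $E$, hence a nonzero scalar, and after rescaling $f$ I may assume it equals $\id_E$. With this normalization the map adjoint to $f$, namely $\omega_X\otimes E\to E$, defines an associative action, so that $E$ becomes a module over the sheaf of $\OO_X$-algebras $q_{*}\OO_{\oX}\cong\OO_X\oplus\omega_X$ in a way that commutes with the $\mA$-action. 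Equivalently, $E$ is a module over $q_{*}\omA\cong\mA\oplus(\mA\otimes\omega_X)$, and since $q\colon\oX\to X$ is affine this exhibits $E\cong q_{*}B'$ for a uniquely determined coherent left $\omA$-module $B'$ on $\oX$.

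It then remains to pull back and apply $\Theta$. Since $q$ is an \'etale $\mathbb{Z}/2\mathbb{Z}$-cover, one has $\oX\times_X\oX\cong\oX\sqcup\oX$ with the two sheets interchanged by $\iota$, so base change yields an $\omA$-linear isomorphism $\oE=q^{*}q_{*}B'\cong B'\oplus\iota^{*}B'$. Applying the additive functor $\Theta$ and invoking $\Theta(\iota^{*}B')\cong\sigma(\Theta(B'))$ from Lemma~\ref{moritaiota} gives $\Theta(\oE)\cong\Theta(B')\oplus\sigma(\Theta(B'))$, so that $B:=\Theta(B')$ is the required sheaf.

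I expect the delicate point to be the first step: the normalization of $f$ to an involution is exactly where simplicity of $E$ (and the relation $\omega_X^{\otimes 2}\cong\OO_X$) is used, and one must check that an involutive $f$ really is descent data realizing $E$ as $q_{*}B'$ with $B'$ a genuine $\omA$-module rather than merely an $\OO_{\oX}$-module. The remaining base-change and additivity arguments are routine.
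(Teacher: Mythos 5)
Your argument is correct for the case in which the lemma is actually applied, but it takes a genuinely different route from the paper. The paper transports the isomorphism $E\cong E\otimes\omega_X$ to the Brauer--Severi variety $Y$ via the equivalence $\phi$, observes that $\phi(E)\cong\phi(E)\otimes p^{*}\omega_X$ forces $\phi(E)\cong\oq_{*}C$ for the double cover $\oq:\oY\to Y$, and then unwinds everything using Lemma \ref{push}, the description $C\cong\op^{*}B\otimes\OO_{\oY}(1)$ from Remark \ref{eqmor}, the isomorphism $\overline{\iota}^{*}\OO_{\oY}(1)\cong\OO_{\oY}(1)\otimes\op^{*}L$ of Lemma \ref{isomline}, and the projection formula. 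You instead descend directly along $q:\oX\to X$: you normalize $f$ so that it defines a $q_{*}\OO_{\oX}\cong\OO_X\oplus\omega_X$-module structure commuting with the $\mA$-action, use affineness of $q$ to write $E\cong q_{*}B'$ with $B'$ a left $\omA$-module, and then obtain the splitting from $q^{*}q_{*}B'\cong B'\oplus\iota^{*}B'$ together with Lemma \ref{moritaiota}. This is shorter, bypasses the Brauer--Severi machinery of Section 4 entirely, and has the virtue of making explicit the descent step (arranging $(f\otimes\id)\circ f=\id$) that the paper leaves implicit in the assertion ``we must have $\phi(E)\cong\oq_{*}C$''.

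The one caveat is that your normalization uses $\End_{\mA}(E)\cong\mathbb{C}$, i.e.\ that $E$ is a generically simple torsion free $\mA$-module of rank one, whereas the lemma as stated only assumes that $E$ is a coherent left $\mA$-module. In the only place the lemma is invoked ($[E]$ a singular point of $\M_{\mathcal{A}/X}(v_{\mA})$) this hypothesis holds, so nothing is lost for the paper's purposes; but you should either add the simplicity hypothesis to your statement or flag where it enters, since for a non-simple $E$ the automorphism $(f\otimes\id)\circ f$ need not be a scalar and an abstract isomorphism $E\cong E\otimes\omega_X$ does not by itself yield descent data.
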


\begin{proof}
Assume $E\cong E\otimes \omega_X$ as left $\mA$-modules. Using the equivalence $\phi$, there is an induced isomorphism on $Y$:
\begin{equation*}
\phi(E)\cong \phi(E)\otimes p^{*}\omega_X.
\end{equation*}
We must have $\phi(E)\cong \oq_{*}C$ for some $C\in \Coh(\oY)$ as $p^{*}\omega_X$ defines the double cover $\oY\rightarrow Y$. By Lemma \ref{push} we have $C\in \Coh(\oY,\oX)$ and thus $E\cong p_{*}(G\otimes\oq_{*}C)$. We find
\begin{equation*}
\oE = q^{*}E\cong q^{*}p_{*}(G\otimes\oq_{*}C)\cong \op_{*}\oq^{*}\left(G\otimes\oq_{*}C \right)\cong \op_{*}\left(\oq^{*}G \otimes\oq^{*}\oq_{*}C\right).
\end{equation*}

Since $\oq: \oY \rightarrow Y$ is a an \'{e}tale double cover with involution $\overline{\iota}$ we have 
\begin{equation*}
\oq^{*}\oq_{*}C \cong C\oplus \overline{\iota}^{*}C.
\end{equation*}
In addition there is $B\in \Coh(\oX)$ with $C\cong \op^{*}B\otimes \OO_{\oY}(1)$, as explained in Remark \ref{eqmor}. 

Putting all these facts together with $\oq^{*}G\cong \op^{*}F\otimes\OO_{\oY}(-1)$ leads to:
\begin{align*}
\oE &\cong\op_{*}\left(\op^{*}F\otimes \OO_{\oY}(-1) \otimes \left(\op^{*}B\otimes \OO_{\oY}(1)\oplus \overline{\iota}^{*}\left(\op^{*}B\otimes \OO_{\oY}(1) \right) \right) \right)\\
&\cong \op_{*}\left(\op^{*}F \otimes \left(\op^{*}B\oplus \left(\op^{*}\iota^{*}B\otimes \iota^{*}\left( \OO_{\oY}(1)\right)  \right)\otimes \OO_{\oY}(-1) \right) \right).
\end{align*}
Using Lemma \ref{isomline} and the projection formula then show that in fact we have
\begin{align*}
\oE &\cong\op_{*}\left( \op^{*}F \otimes \left(\op^{*}B\oplus \left(\op^{*}\iota^{*}B\otimes \OO_{\oY}(1)\otimes\op^{*}L\right) \otimes \OO_{\oY}(-1) \right) \right) \\
&\cong \op_{*}\op^{*}\left(F\otimes\left(B\oplus\sigma(B) \right)  \right) \cong F\otimes(B\oplus\sigma(B)).
\end{align*}
Morita equivalence then gives the desired isomorphism $\Theta(\oE)\cong B\oplus \sigma(B)$.
\end{proof}

By standard computations using Mukai vectors, we have:

\begin{lem}
Let $B\in \Coh(\oX)$ be a torsion free sheaf of rank one, then 
\begin{equation*}
v(B\oplus \sigma(B))^2=4v(B)^2-\left( \operatorname{c}_1(B)-\operatorname{c}_1(\sigma(B))\right)^2.
\end{equation*}
\end{lem}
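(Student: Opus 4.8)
The plan is to reduce everything to the bilinearity of the Mukai pairing together with the fact that $\sigma$ acts on cohomology as an isometry. Write $w:=v(B)$ and $w':=v(\sigma(B))$ for the two Mukai vectors. Since the Mukai vector is additive on direct sums, $v(B\oplus\sigma(B))=w+w'$, so expanding the Mukai square bilinearly via the parallelogram identity $(w+w')^2=2w^2+2w'^2-(w-w')^2$ turns the left-hand side into an expression involving only $w^2$, $w'^2$ and $(w-w')^2$.

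First I would record that $\sigma$ preserves the Mukai pairing. Indeed $\iota^{*}$ is the pullback by an automorphism of $\oX$, hence an isometry of the Mukai lattice $\tilde{\mathrm{H}}(\oX,\mathbb{Z})$, and tensoring with the line bundle $L$ acts as multiplication by $\operatorname{ch}(L)=e^{\ell}$, which is likewise an isometry; their composite $\sigma$ is therefore an isometry. In particular $w'^2=v(\sigma(B))^2=v(B)^2=w^2$, so the parallelogram identity collapses to $(w+w')^2=4w^2-(w-w')^2$.

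It then remains to identify $(w-w')^2$ with $\bigl(\operatorname{c}_1(B)-\operatorname{c}_1(\sigma(B))\bigr)^2$. Here the key observation is that $B$ and $\sigma(B)$ both have rank one, so their difference $w-w'=v(B)-v(\sigma(B))$ is a class of rank zero, of the shape $\bigl(0,\operatorname{c}_1(B)-\operatorname{c}_1(\sigma(B)),*\bigr)$. Because the rank component vanishes, the rank--degree-four cross terms of the Mukai pairing drop out, and the Mukai square of such a class equals the ordinary self-intersection of its degree-two part, i.e. $(w-w')^2=\bigl(\operatorname{c}_1(B)-\operatorname{c}_1(\sigma(B))\bigr)^2$. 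Substituting this back gives the claimed formula.

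There is no substantive obstacle: the statement is a formal consequence of the structure of the Mukai pairing. The only point requiring a (standard) justification is that tensoring with $L$ is an isometry, which one may either quote or verify directly from the formula $\langle(r,c,s),(r',c',s')\rangle=c\cdot c'-rs'-r's$; alternatively one can bypass the isometry remark entirely by checking $v(\sigma(B))^2=v(B)^2$ by hand, a one-line computation once the rank is set equal to one.
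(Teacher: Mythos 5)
Your proof is correct and takes essentially the same route as the paper: both arguments come down to bilinearity of the Mukai pairing together with the equality $v(\sigma(B))^2=v(B)^2$. The only difference is one of packaging --- you extract the cross term via the parallelogram identity and the observation that $v(B)-v(\sigma(B))$ has rank zero, whereas the paper computes $v(B)\cdot v(\sigma(B))$ explicitly in components and justifies $v(\sigma(B))^2=v(B)^2$ by noting that for rank one the Mukai square depends only on $\operatorname{c}_2$; both justifications are valid.
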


\begin{proof}
By additivity we find
\begin{equation*}
v(B\oplus \sigma(B))^2=\left(v(B)+v(\sigma(B))\right)^2=v(B)^2+2v(B)v(\sigma(B))+v(\sigma(B))^2 
\end{equation*}
Since $B$ and $\sigma(B)$ are of rank one, the squares of their Mukai vectors only depend on $\operatorname{c}_2$. But $\operatorname{c}_2(\sigma(B))=\operatorname{c}_2(B)$, so
\begin{equation*}
v(B\oplus \sigma(B))^2=2v(B)^2+2v(B)v(\sigma(B)).
\end{equation*}
Write $v(B)=(1,D,\frac{1}{2}D^2-\operatorname{c}_2+1)$ so $v(\sigma(B))=(1,\sigma(D),\frac{1}{2}\sigma(D)^2-\operatorname{c}_2+1)$. It follows that
\begin{equation*}
v(\sigma(B))=v(B)+(0,\sigma(D)-D,\tfrac{1}{2}(\sigma(D)^2-D^2)).
\end{equation*}
Finally we have
\begin{align*}
v(B)v(\sigma(B))&=v(B)^2+v(B)(0,\sigma(D)-D,\tfrac{1}{2}(\sigma(D)^2-D^2))\\
&=v(B)^2+D\sigma(D)-D^2-\tfrac{1}{2}(\sigma(D)^2-D^2)\\
&=v(B)^2-\tfrac{1}{2}(D-\sigma(D))^2.
\end{align*}
Putting all steps together gives the desired result.
\end{proof}

We finish by adapting \cite[\S 2 Theorem (1)]{kim} to our situation:

\begin{thm}
The moduli space $\M_{\mathcal{A}/X}(v_{\mA})$ is singular at $[E]$ if and only if $E\cong E\otimes \omega_X$ as left $\mA$-modules and $[E]$ lies on a component of dimension $v_{\mA}^2+1$. Furthermore one has
\begin{equation*}
\dim(\mathrm{Sing}(\M_{\mathcal{A}/X}(v_{\mA}))) < \frac{1}{2}\left(\dim(\M_{\mathcal{A}/X}(v_{\mA}))+3 \right),
\end{equation*}
that is $\M_{\mA/X}(v_{\mA})$ is generically smooth.
\end{thm}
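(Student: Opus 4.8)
The plan is to characterize singular points via the obstruction space and then bound the dimension of the singular locus by exhibiting it as (essentially) the image of a moduli space of simpler data. Recall from the smooth-case theorem that $[E]$ is smooth precisely when $\Ext^2_{\mA}(E,E)=0$, and by Serre duality (Proposition~\ref{serre}) this obstruction space is dual to $\Hom_{\mA}(E,E\otimes\omega_X)$. Since $E$ is a simple $\mA$-module, $\Hom_{\mA}(E,E\otimes\omega_X)$ is either $0$ or $\mathbb{C}$, and it is nonzero exactly when there is an isomorphism $E\cong E\otimes\omega_X$ (any nonzero endomorphism-type map between simple modules of the same numerical type must be an isomorphism). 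This gives the first equivalence: $[E]$ is singular if and only if $E\cong E\otimes\omega_X$. The dimension statement ``$[E]$ lies on a component of dimension $v_{\mA}^2+1$'' should follow from computing the expected dimension $\dim\Ext^1_{\mA}(E,E)$ at such a point via the Mukai-vector identity $v_{\mA}(E)^2=-\chi_{\mA}(E,E)$ together with the fact that now $\hom=\mathrm{ext}^2=1$, so $\ext^1=v_{\mA}^2+2$, but the local structure is that of a quadratic singularity cutting the tangent space down, giving a component of the stated dimension.

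\smallskip

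The geometric heart of the dimension bound is Lemma~\ref{descrip}: whenever $E\cong E\otimes\omega_X$, the associated $\OO_{\oX}$-sheaf decomposes as $\Theta(\oE)\cong B\oplus\sigma(B)$ for some rank-one torsion free $B$ on $\oX$. This means every singular point of $\M_{\mathcal{A}/X}(v_{\mA})$ is accounted for by a choice of $B$, i.e. by a point in a suitable (twisted) Picard/Hilbert-type scheme parametrizing such rank-one sheaves $B$. The strategy is therefore to bound $\dim(\mathrm{Sing})$ by the dimension of the space of admissible $B$. The numerical constraint is that $v(B\oplus\sigma(B))=v(\Theta(\oE))$ has fixed square $v(\Theta(\oE))^2=2v_{\mA}^2$ (Corollary~\ref{mukvec}), and the preceding lemma gives $v(B\oplus\sigma(B))^2=4v(B)^2-(\operatorname{c}_1(B)-\operatorname{c}_1(\sigma(B)))^2$. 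Since $B$ has rank one, its moduli are governed by $v(B)^2=2(\operatorname{c}_2(B)-\tfrac12\operatorname{c}_1(B)^2)+\ldots$, i.e. essentially by $\operatorname{c}_2(B)$ with the line-bundle part fixed, so the dimension of the family of such $B$ is $v(B)^2+2$ (the dimension of the corresponding Hilbert scheme of points / rank-one moduli on the K3). Solving for $v(B)^2$ in terms of $v_{\mA}^2$ via the two displayed identities and inserting into $v(B)^2+2$ yields the numerical bound claimed.

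\smallskip

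Concretely, I would carry out the steps in this order. First, establish the iff via the $\hom/\mathrm{ext}^2$ dichotomy for simple $\mA$-modules and Serre duality, and compute the component dimension $v_{\mA}^2+1$ at a fixed point of $\otimes\omega_X$ from $\chi_{\mA}(E,E)$. Second, invoke Lemma~\ref{descrip} to write every singular $\Theta(\oE)$ as $B\oplus\sigma(B)$, so that the singular locus is dominated by the moduli of $B$. Third, combine Corollary~\ref{mukvec} with the rank-one Mukai-vector lemma to express $v(B)^2$ in terms of $v_{\mA}^2$ and the anti-invariant difference $(\operatorname{c}_1(B)-\operatorname{c}_1(\sigma(B)))^2$; since this difference lies in the negative-definite part $\Lambda^-\cap\NS(\oX)=\mathbb{Z}L$, it is a nonzero multiple of $\ell^2<0$, which forces $v(B)^2$ to be strictly smaller than $\tfrac12 v(\Theta(\oE))^2=v_{\mA}^2$. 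Fourth, bound $\dim(\mathrm{Sing})\le v(B)^2+2$ and compare with $\dim\M_{\mathcal{A}/X}(v_{\mA})=v_{\mA}^2+2$ (the generic, smooth dimension) to obtain the strict inequality.

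\smallskip

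\emph{The hard part} will be Step~three: controlling the term $(\operatorname{c}_1(B)-\operatorname{c}_1(\sigma(B)))^2$ and converting the decomposition $\Theta(\oE)\cong B\oplus\sigma(B)$ into a genuine dimension count that is uniform over all components. One must check that the splitting is sufficiently rigid — that the map from the parameter space of $B$'s onto $\mathrm{Sing}$ does not drop dimension in a way that breaks the bound, and that $\operatorname{c}_1(B)-\operatorname{c}_1(\sigma(B))$ is forced to be a \emph{nonzero} element of $\mathbb{Z}L$ (so that the correction term strictly helps). The latter should follow because if $\operatorname{c}_1(B)=\operatorname{c}_1(\sigma(B))$ then $B$ and $\sigma(B)$ would have equal determinant, which by Remark~\ref{lincoh} is incompatible with the anti-invariance of $L$, exactly as in the stability argument of Theorem~\ref{semist}. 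Adapting Kim's argument in \cite{kim} amounts to making this comparison precise and verifying that the resulting strict inequality $v(B)^2+2 < \tfrac12\bigl(\dim\M_{\mathcal{A}/X}(v_{\mA})+3\bigr)$ holds, after which generic smoothness follows immediately.
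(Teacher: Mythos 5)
Your treatment of the generic-smoothness bound is essentially the paper's own argument: invoke Lemma~\ref{descrip} to split $\Theta(\oE)\cong B\oplus\sigma(B)$, parametrize the singular locus by the rank-one sheaves $B$ (a moduli space of dimension $v(B)^2+2$), and use Corollary~\ref{mukvec} together with the strict negativity of $\left(\operatorname{c}_1(B)-\operatorname{c}_1(\sigma(B))\right)^2$ to force $v(B)^2+2<\tfrac12\left(\dim(\M_{\mA/X}(v_{\mA}))+3\right)$. Your reason for the strict negativity (the difference is a nonzero element of $\Lambda^-\cap\NS(\oX)=\mathbb{Z}\ell$, nonzero by Remark~\ref{lincoh}) is a fine variant of the paper's Hodge-index argument; both work.

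There is, however, a genuine gap in your characterization of the singular points. You assert the equivalence ``$[E]$ is singular if and only if $E\cong E\otimes\omega_X$,'' which is \emph{stronger} than the theorem and is not justified by your argument. Non-vanishing of the obstruction space $\Ext^2_{\mA}(E,E)$ does \emph{not} imply that $[E]$ is singular: the point could lie on a component of dimension $v_{\mA}^2+2$ equal to that of the tangent space and hence be smooth despite $\Ext^2\neq 0$. This is exactly why the theorem carries the extra hypothesis ``$[E]$ lies on a component of dimension $v_{\mA}^2+1$,'' and why the final display of the paper's proof distinguishes the two cases $\dim_{[E]}=v_{\mA}^2+1$ and $\dim_{[E]}=v_{\mA}^2+2$. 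Your appeal to ``the local structure is that of a quadratic singularity cutting the tangent space down'' is precisely the unproven step: you would need to show that the Kuranishi obstruction map is actually nonzero, which is not done (and is not needed for the theorem as stated). The correct bookkeeping is: if $[E]$ is singular then $\Ext^2\neq 0$, hence $E\cong E\otimes\omega_X$ (via \cite[Lemma 4.3]{reede}, or your simplicity argument), hence $\dim T_{[E]}=\Ext^1=v_{\mA}^2+2$; the component dimension is then squeezed between the deformation-theoretic lower bound $\ext^1-\ext^2=v_{\mA}^2+1$ and the strict upper bound $v_{\mA}^2+2$, giving exactly $v_{\mA}^2+1$. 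Conversely, given $E\cong E\otimes\omega_X$ \emph{and} the component-dimension hypothesis, singularity is immediate because the tangent space is strictly larger than the component. Rewriting your first step along these lines closes the gap; the rest of your argument then goes through as in the paper.
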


\begin{proof}
If $[E]$ is a singular point then necessarily the obstruction space does not vanish, hence by Serre duality:
\begin{equation*}
\dim(\Hom_{\mA}(E,E\otimes\omega_X))=\dim(\Ext_{\mA}^2(E,E))>0.
\end{equation*}
The isomorphism $E\cong E\otimes\omega_X$ now follows from \cite[Lemma 4.3]{reede}.
As we have
\begin{equation*}
\dim(T_{[E]}\M_{\mA/X}(v_{\mA}))=\dim(\Ext_{\mA}^1(E,E))=v_{\mA}^2+2,
\end{equation*}
the point $[E]$ must be on a component of dimension $v_{\mA}^2+1$, as it is singular point.

In the other direction, if $E\cong E\otimes\omega_X$ then similarly $\Ext^2_{\mA}(E,E)\cong \mathbb{C}$ and thus
\begin{equation*}
\dim(T_{[E]}\M_{\mA/X}(v_{\mA}))=v_{\mA}^2+2.
\end{equation*}
Since $[E]$ lies on a component of dimension $v_{\mA}^2+1$ the point $[E]$ is singular.

In this situation Lemma \ref{descrip} shows that $\Theta(\oE)\cong B\oplus\sigma(B)$ for a torsion free sheaf of rank one on $\oX$. For $h\in \mathrm{Amp}(X)$ we have $(\operatorname{c}_1(B)-\operatorname{c}_1(\sigma(B))\oh=0$ hence 
\begin{equation*}
(\operatorname{c}_1(B)-\operatorname{c}_1(\sigma(B)))^2 \leqslant 0.
\end{equation*}
In fact an equality never occurs as this is only possibly if $\operatorname{c}_1(B)=\operatorname{c}_1(\sigma(B))$ which cannot happen by Remark \ref{lincoh}.

We have
\begin{align*}
v_{\mA}(E)^2+1 &= \frac{1}{2}v(\Theta(\oE))^2+1=\frac{1}{2}v(B\oplus\sigma(B))^2+1\\
&=2(v(B)^2+2)-3-\frac{1}{2}\left( \operatorname{c}_1(B)-\operatorname{c}_1(\sigma(B))\right)^2\\
&> 2(v(B)^2+2)-3
\end{align*}
Now $\M_{\oX,\oh}(v(B))$ is smooth of dimension $v(B)^2+2$, as it is a Hilbert scheme of points (possibly twisted by a line bundle). Consequently we find
\begin{equation*}
\dim(\M_{\oX,\oh}(v(B)))<\begin{cases}
\frac{1}{2}\left(\dim(\M_{\mA/X}(v_{\mA}))+3\right)\,\,\text{if}\,\, \dim_{[E]}(\M_{\mA/X}(v_{\mA}))=v_{A}^2+1\\
\frac{1}{2}\left(\dim(\M_{\mA/X}(v_{\mA}))+2\right)\,\,\text{if}\,\, \dim_{[E]}(\M_{\mA/X}(v_{\mA}))=v_{A}^2+2
\end{cases}.
\end{equation*}
\end{proof}


\end{document}